
\documentclass{PRM}



\newtheorem{theorem}{Theorem}

\usepackage[utf8]{inputenc} 
\usepackage[T1]{fontenc}    
\usepackage{hyperref}       
\usepackage{amsfonts}       
\usepackage{amsmath}

\usepackage{amsfonts,amssymb}
\usepackage{amsthm}

\newcommand{\R}{\mathbb{R}}

\newcommand{\bb}{\begin{equation}}

\usepackage[all]{xy}

\numberwithin{equation}{section}

\newcommand{\sobolev}[2]{\Vert #1\Vert_{H^{#2}}}
\newcommand{\hm}[3]{\vert\vert\vert #1\vert\vert\vert_{E_{#2,#3}}}
\newcommand{\km}[3]{\Vert #1\Vert_{#2,#3}}
\newcommand{\kma}[2]{\Vert #1\Vert_{#2,2,m}}

\newtheorem{lemma}[theorem]{Lemma}
\newtheorem{proposition}[theorem]{Proposition}
\newtheorem{corollary}[theorem]{Corollary}



\begin{document}

\title[Well-posedness and global analyticity for a generalized $0$-equation]{Local well-posedness and global analyticity for solutions of a generalized $0$-equation}

\author{\name{Priscila L. \surname{da Silva}}}

\address{Department of Mathematical Sciences, School of Science, Loughborough University, Loughborough, UK \email{P.Leal-Da-Silva@lboro.ac.uk}}

\address{Centre of Mathematics, Computation and Cognition, Universidade Federal do ABC, Brazil \email{priscila.silva@ufabc.edu.br}}



\begin{abstract}
In this work we study the Cauchy problem in Gevrey spaces for a generalized class of equations that contains the case $b=0$ of the $b$-equation. For the generalized equation, we prove that it is locally well-posed for initial data in Gevrey spaces. Moreover, as we move to global well-posedness, we show that for a particular choice of the parameter in the equation the local solution is global analytic in both time and spatial variables.
\end{abstract}

\keywords{Well-posedness, Gevrey spaces, $b$-equation, Holm-Staley equation}

\classification{35A01; 35A02; 35A20}

\maketitle

\section{Introduction}

The 4-parameter equation
\begin{align}\label{4par}
    u_t - u_{txx} + au^ku_x - bu^{k-1}u_xu_{xx} - cu^ku_{xxx}=0,\quad a,b,c\in \R\setminus\{0\},\quad k\in\mathbb{N},
\end{align}
studied in \cite{Eu,Eu2,HH}, is a generalization of the Camassa-Holm equation \cite{CH}
\begin{align}\label{CH}
    u_t - u_{txx} + 3uu_x - 2u_xu_{xx} -uu_{xxx}=0,
\end{align}
and the Novikov equation \cite{HW,Novikov}
\begin{align}\label{Nov}
    u_t - u_{txx} + 4u^2u_x - 3uu_xu_{xx} - u^2u_{xxx}=0,
\end{align}
that admits certain scaling transformations as symmetries. The equation \eqref{4par} has proven to be an interesting mathematical equation once it is possible to choose $a=k+2,b=k+1$ and $c=1$ in order to transform it into a one-parameter family of equations that still unifies \eqref{CH} and \eqref{Nov}, and also admits the peaked wave solutions $u(t,x) = c^{1/k}e^{-|x-ct|},$ called peakon solutions \cite{CH}, where $c$ denotes the wave speed. Despite admitting an infinite number of conservation laws only when the equation is reduced to \eqref{CH} or \eqref{Nov}, it was not long before the interesting properties of \eqref{4par} attracted attention from researchers. In terms of applied analysis, Himonas and Holliman, in the same paper \cite{HH} showed that for any positive integer $k\geq 1$, $b=a+1$ and $c=1$, the equation is Hadamard well-posed in $H^s(\R)$ for $s>3/2$ and, more recently, Barostichi, Himonas and Petronilho \cite{BHP1} considered the choices $a=k+2,b=k+1$ and $c=1$ in \eqref{4par} to extend local well-posedness to global for the resulting equation and also understand the behaviours of global analytic solutions provided that the McKean quantity $m_0=m_0(x):=(1-\partial_x^2)u(0,x)$ does not change sign. For a geometric interpretation of the sign persistance of the McKean quantity and its consequences, see \cite{const} and discussions in \cite{CK,Kolev}.

It is important to observe that the restriction $k\geq 1$ in \eqref{4par} is due to two main reasons: firstly, the Camassa-Holm and Novikov equations are accomplished when we have two particular positive choices of $k$ and, secondly, problems with singularity obviously arise whenever considering $k<1$. Moreover, the former also explains why the constants $a,b,c$ are taken as different than zero. However, by allowing $b=0$ and $a=c=1$ in \eqref{4par}, one arrives at the equation
\begin{align}\label{gHS}
    m_t + u^km_x= 0,
\end{align}
where $k$ will be taken as a positive integer and $m=u-u_{xx}$. In the particular case where $k=1$, \eqref{gHS} is a very particular case of the $b$-equation $m_t + bmu_x + um_x=0$ considered in \cite{DHH} and later shown in \cite{HS1,HS} to have hydrodynamic applications when $b\neq -1$. Moreover, it can also be obtained from Kodama transformation to describe shallow water elevation \cite{holm-physica}.  
In terms of well-posedness, we observe that in \cite{HH,Yan} the authors showed that \eqref{gHS} is well-posed for an initial value $u_0\in H^{s}(\R),$ where $s>3/2$.

It is crucial to observe, however, that although local well-posedness of \eqref{gHS} in Sobolev \cite{HH}, Besov \cite{Yan} and Gevrey \cite{BHPlocal} spaces has been successfully established, not much else has been considered for $k>1$. In fact, the reasons for this fact are rather simple: the case $k=1$ in \eqref{gHS} is only known to conserve the momentum $\int_{\R} m(t,x)dx$ for rapidly decreasing solutions, which is equivalent to saying that
\begin{align}\label{1.0.5}
    \mathcal{H}(u) = \int_{\R}u(t,x)dx
\end{align}
is independent of time for the same sort of solution. For the generalized equation  \eqref{gHS} with $k>1$, the situation becomes even more drastic as no conservation laws seem to exist \textcolor{red}{\cite{Eu}}, which poses a difficulty that perhaps may be impossible to overcome in the attempt to study solutions and their properties.


One of the pioneering works is \cite{Eu1}, where in the particular case of $k=1$ in \eqref{gHS} the authors considered global well-posedness and deduced that, also making the assumption that the McKean quantity does not change sign, in $H^3(\R)$, it is possible to extend the local solutions and then the maximal time of existence is infinite. This is indeed a remarkable result once the equation \eqref{gHS} lacks the conservation of the $H^1(\R)$ norm and the construction of a highly non-trivial functional was required to show that the solution could not blow up at a finite time. Following a similar direction, in \cite{IgorNew} the authors determined global well-posedness for the periodic case and also studied continuation of periodic solutions. For the general case $k>1$ the authors in \cite{Eu1} also answered some of the questions raised by Himonas and Thompson \cite{HT}, giving a characterization of assymptotic behavior or solutions based on the initial data, and a blow-up criteria has been established in \cite{Yan}. The determination of global well-posedness for $k>1$, however, is still an open problem.

In this paper, we are interested in the initial value problem
\begin{align}\label{nonlocal}
u_t =F(u),\quad\quad\quad u(0,x) = u_0(x),
\end{align}
where
\begin{align}\label{1.0.7}
    F(u)=- \partial_x\left[\frac{u^{k+1}}{k+1} + \frac{3}{2}(1-\partial_x^2)^{-1}(ku^{k-1}u_x^2)\right] + (1-\partial_x^2)^{-1}\left[\frac{k(k-1)}{2}u^{k-2}u_x^3\right],
\end{align}
and complementing the results found in \cite{Eu1,IgorNew}. We observe that \eqref{nonlocal}--\eqref{1.0.7} is nothing but the evolution formulation of the Cauchy problem of \eqref{gHS} after the inversion of the Helmholtz operator $1-\partial_x^2$.

Consider the $L^2_x(\R)$ space of square integrable functions endowed with the norm
$$\Vert f\Vert_{L^2_x} = \left(\int_{\R}|f(x)|^2dx\right)^{1/2}.$$
The main function space of our interest in the present paper is the Gevrey space $G^{\sigma,s}(\R)$, where $\sigma>0$ and $s\in\R$, of functions in $L^2(\R)$ such that the norm
$$\Vert f\Vert_{G^{\sigma,s}} := \Vert (1+|\xi|^2)^{s/2}e^{\sigma|\xi|}\hat{f}(\xi)\Vert_{L^2_{\xi}} = \left(\int_{\R}(1+|\xi|^2)^s e^{2\sigma|\xi|}|\hat{f}(\xi)|^2d\xi\right)^{1/2}$$
is finite, where $\hat{f}$ denotes the Fourier transform
$$\hat{f}(\xi) = \frac{1}{\sqrt{2\pi}}\int_{\R}e^{-ix\xi}f(x)dx.$$
In the particular case where $\sigma\to 0$, the space $G^{0,s}(\R)$ becomes the usual Sobolev space $H^s(\R)$. In a result known as Paley-Wiener theorem (see \cite{Kat}), the Gevrey space $G^{\sigma,s}(\R)$ is characterized as the restriction to the real line of functions that are analytic on a strip of width $2\sigma$.


Our main intention is to show that well-posedness of \eqref{nonlocal}--\eqref{1.0.7} goes beyond Sobolev spaces in the sense of a  proof for global well-posedness in Gevrey spaces by making use of the Kato-Masuda \cite{KM} machinery and certain embeddings between spaces.

Before proceeding with our main result, we state a generalization of global well-posedness in Sobolev spaces for equation \eqref{nonlocal}--\eqref{1.0.7} with $k=1$. We observe that the result proven in \cite{Eu1} covers an initial data $u_0\in H^3(\R)$, while here we establish the result to $u_0\in H^s(\R)$, $s\geq 3$, which is enough for our purposes.

\begin{proposition}\label{teo.global}
Given $u_0\in H^s(\R)$, $s\geq 3$, if $m_0 \in L^1(\R)$ does not change sign, then the unique local solution $u$ for \eqref{nonlocal}--\eqref{1.0.7} with $k=1$ exists globally in $C([0,\infty);H^s(\R)) \cap C^1([0,\infty);H^{s-1}(\R))$.
\end{proposition}

With Proposition \ref{teo.global} in hand, we enunciate our main result.

\begin{theorem}\label{teo1.4}
    Given $u_0\in G^{1,s}(\R)$, with $s>5/2$, if $m_0(x)$ does not change sign, then the Cauchy problem of \eqref{nonlocal}--\eqref{1.0.7} with $k=1$ has a unique global analytic solution $u\in C^{\omega}([0,\infty)\times \R)$. 
\end{theorem}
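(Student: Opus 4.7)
The strategy is to combine Theorem~\ref{teo.global} (global $H^{s}$ existence for $k=1$) with the Kato-Masuda analytic machinery \cite{KM} applied to the Gevrey scale $\{G^{\sigma,s}\}_{0<\sigma\le 1}$: the Sobolev bound supplied by the former prevents the Kato-Masuda radius of analyticity from collapsing, so the local analytic solution can be continued to all times.

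First, because the weight $e^{|\xi|}$ in the definition of $G^{1,s}$ dominates every polynomial in $\xi$, the embedding $G^{1,s}(\R)\hookrightarrow H^{s'}(\R)$ holds for every $s'\in\R$; in particular $u_{0}\in H^{3}(\R)$, and the Paley-Wiener representation (Proposition~\ref{PW}) yields sufficient decay for $m_{0}\in L^{1}(\R)$. By the sign hypothesis on $m_{0}$, Theorem~\ref{teo.global} produces a unique global Sobolev solution $u\in C([0,\infty);H^{3}(\R))$; local $H^{3}$-uniqueness identifies it, on the Gevrey lifespan, with the solution from \cite{BHPlocal}, so $M_{T}:=\sup_{t\in[0,T]}\|u(t)\|_{H^{s}}<\infty$ for every $T>0$.

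Second, I would establish a Kato-Masuda tame estimate for the quadratic nonlinearity in \eqref{1.0.7} with $k=1$, of the form
\[
\|F(u)\|_{G^{\sigma',s}}\;\le\;\frac{C(\|u\|_{H^{s}})}{\sigma-\sigma'}\,\|u\|_{G^{\sigma,s}},\qquad 0<\sigma'<\sigma\le 1,
\]
by combining the Moser-type Gevrey algebra bounds (valid for $s>1/2$) with the $L^{2}$-boundedness of $\p_{x}(1-\p_{x}^{2})^{-1}$. Setting $\Phi(t):=\tfrac12\|u(t)\|_{G^{\sigma(t),s}}^{2}$ along a decreasing profile $\sigma(t)$ and using this estimate in the energy identity yields a differential inequality whose coefficients depend on $M_{T}$ alone; solving for $\sigma(t)$ produces a continuous $\sigma:[0,\infty)\to(0,1]$ that is strictly positive on every finite interval, with $u(t,\cdot)\in G^{\sigma(t),s}(\R)$ for all $t\ge0$.

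Finally, iterating the Kato-Masuda local construction on successive time-slices where $\Phi$ and $\sigma(t)$ are controlled produces a solution analytic in $(t,x)$ in a neighbourhood of each $(t_{0},x_{0})\in[0,\infty)\times\R$; uniqueness glues these pieces into a global $u\in C^{\omega}([0,\infty)\times\R)$, with spatial radius $\sigma(t)$ certified by Proposition~\ref{PW}. The main obstacle will be securing the tame estimate with its prefactor controlled purely by $\|u\|_{H^{s}}$ rather than by the full Gevrey norm: only then is the differential inequality for $\sigma(t)$ autonomous in the radius, and so unable to drive it to zero in finite time. The nonlocal quadratic term $(1-\p_{x}^{2})^{-1}(u_{x}^{2})$ in \eqref{1.0.7} is the delicate piece, because the two derivatives falling on $u$ interact nontrivially with the exponential Gevrey weight when one wishes to isolate a low-order norm in front.
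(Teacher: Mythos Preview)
Your overall architecture---global Sobolev control plus Kato--Masuda to propagate analyticity---is the same as the paper's, but the proposal contains two genuine gaps.

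\medskip
\textbf{The tame estimate cannot hold as you wrote it.} An inequality of the form $\|F(u)\|_{G^{\sigma',s}}\le C(\|u\|_{H^{s}})(\sigma-\sigma')^{-1}\|u\|_{G^{\sigma,s}}$ with $C$ depending only on the Sobolev norm is not available for a quadratic nonlinearity: the Gevrey algebra/Leibniz estimates necessarily place \emph{both} factors in the Gevrey norm, so the prefactor picks up a $\|u\|_{G^{\sigma,s}}$ and the resulting differential inequality for $\sigma(t)$ is not autonomous. You correctly flag this as ``the main obstacle,'' but you do not resolve it, and plugging an estimate on $\|F(u)\|$ into the energy identity via Cauchy--Schwarz would in any case cost a derivative that cannot be recovered. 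The paper's resolution (its Proposition~\ref{prop5.5}) is to estimate the \emph{pairing} $D\Phi_{\sigma,m}(u)F(u)=\sum_{j}\frac{e^{2\sigma j}}{(j!)^{2}}\langle\partial_{x}^{j}u,\partial_{x}^{j}F(u)\rangle_{H^{2}}$ directly: integration by parts in the $uu_{x}$ contribution moves the dangerous derivative onto the low-frequency factor and yields a bound of the shape $\bar K(\|u\|_{H^{2}})\,\Phi+\bar\alpha(\|u\|_{H^{2}},\Phi)\,\partial_{\sigma}\Phi$. The dependence on the analytic norm survives, but only in front of $\partial_{\sigma}\Phi$, where it is absorbed by the choice of $\dot\sigma(t)$; the coefficient of $\Phi$ itself depends only on $\|u\|_{H^{2}}$, which is exactly what makes the ODE for $\rho(t)=\Phi_{\sigma(t)}(u(t))$ linear and globally solvable.

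\medskip
\textbf{Time-analyticity is not obtained by iterating Kato--Masuda.} The Kato--Masuda step yields only $u\in C([0,\infty);A(r_{1}))$---continuity in $t$ with values in a space of spatially analytic functions---not analyticity in $t$. The paper obtains $C^{\omega}$ in time by a separate mechanism: a generalized autonomous Ovsyannikov/Cauchy--Kovalevsky theorem (Theorem~\ref{teo1.5}) in the Himonas--Misiolek scale $E_{\sigma,m}$, proved for \emph{arbitrary} base radius $\sigma_{0}\in(0,1]$. Because $A(r_{1})\subset E_{\sigma_{0},m}$ for $\sigma_{0}<r_{1}/e$, one can restart this local analytic solver at $u(T^{\ast})$ with the diminished radius supplied by the Kato--Masuda step, contradict maximality of $T^{\ast}$, and conclude $T^{\ast}=\infty$ (Propositions~\ref{prop5.3}--\ref{prop5.4}). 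Your sketch does not mention this auxiliary scale or the need for an Ovsyannikov theorem valid at small $\sigma_{0}$, and without it the passage from spatial to joint $(t,x)$ analyticity is not justified.
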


In the context of hydrodynamic applications, analyticity is a crucial ingredient to prove an intrinsic characterization of symmetric waves, see \cite{Escher} for more details. Here, we observe that our unique space-time analytic solutions provided by Theorem \ref{teo1.4} are not necessarily traveling waves, which then provides an interesting and general result.


The proof of Theorem \ref{teo1.4} relies on the powerful machinery of Kato and Masuda \cite{KM} and embedding properties of certain spaces, see \cite{BHP1,KM}. Another useful space is an adaptation of the Banach spaces proposed by Himonas and Misiolek in \cite{misiolek}: for $\sigma>0$ and $m$ is a positive integer, the set  $E_{\sigma,m}(\R)$ of infinitely differentiable functions such that
\begin{align*}
    \vert\vert\vert f\vert\vert\vert_{E_{\sigma,m}} = \sup\limits_{j\in\mathbb{Z}_+}\frac{\sigma^j(j+1)^2}{j!}\Vert \partial_x^jf\Vert_{H^{2m}}<\infty
\end{align*}
is a Banach space by its own turn. In order to prove that the lifespan is infinite and the solution is analytic in both variables $t$ and $x$, we will make use of the auxiliary local well-posedness result. 

\begin{proposition}\label{teo1}
    Given $u_0(x)\in E_{\sigma_0,m}(\R)$, with $m\geq 3$ and for some $\sigma_0\in(0,1]$ fixed, for
    \begin{align}\label{lifespan}
      T = \kappa_m \frac{1}{\hm{u_0}{\sigma_0}{m}^k},
    \end{align}
    where
    $$\kappa_m = \frac{1}{\left[\frac{1}{k+1}+\frac{3k}{2} + \frac{k(k-1)}{2}\right](2^{2(k+2)}+8)c_m^k},$$ with $c_m>0$ depending only on $m$, and for every $\sigma \in (0,\sigma_0)$, the Cauchy problem for \eqref{nonlocal}--\eqref{1.0.7} has a unique solution $u$ that is analytic in the disc $D(0,T(\sigma_0-\sigma))$ with values in $E_{\sigma,m}(\R)$. Moreover, the bound
    $$\sup\limits_{|t|<T(1-\sigma)}\hm{u(t) - u_0}{\sigma}{m}< \hm{u_0}{\sigma_0}{m}$$
    holds.
\end{proposition}


The paper is organized as follows. In Section \ref{sec2} we establish the basic function spaces and auxiliary propositions required for the understanding and proofs of our results. In Section \ref{sec3}, we present the proof of Proposition \ref{teo1} for any $k\in \mathbb{Z}_+$, which follows from the technical estimates of Section \ref{sec2}. After that, in Section 4 we present a proof of Proposition \ref{teo.global}, which can also be found in \cite{Eu1} for $s=3$. Finally, in Section \ref{sec5} we provide a proof for global well-posedness in $H^{\infty}(\R)$ and finalize with the extensive and complex proof of Theorem \ref{teo1.4} by making use of Proposition \ref{teo1}.

\section{Function spaces and auxiliary results}\label{sec2}



In this section we will enunciate the theory behind the function spaces presented in the introduction. 

We start recalling that, similarly to Sobolev spaces, one interesting property of Gevrey spaces is that it is possible to continuously embed them based on the parameters $\sigma$ and $s$, see \cite{BHP}:
\begin{enumerate}
    \item If $0<\sigma'<\sigma$ and $s\geq 0$, then $\Vert\cdot \Vert_{G^{\sigma',s}}\leq \Vert\cdot \Vert_{G^{\sigma,s}}$ and $G^{\sigma,s}(\R)\hookrightarrow G^{\sigma',s}(\R)$;
    \item If $0<s'<s$ and $\sigma>0$, then $\Vert\cdot \Vert_{G^{\sigma,s'}}\leq \Vert\cdot \Vert_{G^{\sigma,s}}$ and $G^{\sigma,s}(\R)\hookrightarrow G^{\sigma,s'}(\R)$.
\end{enumerate}

Although our main result involves the use of Gevrey spaces, we will need to consider some auxiliary spaces and their embeddings. Following the work of Kato and Masuda \cite{KM} about the Korteweg-de Vries equation, for $r>0$ fixed we define the spaces $A(r)$ of functions that can be analytically extended to a function on a strip of width $r$, endowed with the norm
\begin{align}\label{2.0.1a}
    \Vert f\Vert_{\sigma,s}^2 = \sum\limits_{j=0}^{\infty} \frac{1}{j!^{2}}e^{2\sigma j} \Vert\partial_x^jf\Vert_{H^s},
\end{align}
for $s\geq 0$ and every $\sigma\in\R$ such that $e^{\sigma}<r$. Observe that if $r\geq r'$ then $f\in A(r)$ implies that $f\in A(r')$ and, therefore, $A(r)\subset A(r')$. 

For $H^{\infty}(\R) := \bigcap\limits_{s\geq 0}H^{s}(\R)$, we have the following sequence of embeddings (see Lemma 2.3 and Lemma 2.5 in \cite{BHP} and Lemma 2.2 in \cite{KM}):
\begin{align}\label{2.0.2}
    G^{\sigma,s}(\R)\hookrightarrow A(\sigma)\hookrightarrow H^{\infty}(\R),
\end{align}
for $\sigma>0$ and $s\geq 0$.


Similarly to Gevrey spaces, we have $E_{\sigma,m}(\R)\hookrightarrow E_{\sigma',m}(\R)$ for $0<\sigma'<\sigma$ and, more importantly, $E_{\sigma,m}(\R)$ is also continuously embedded into $H^{\infty}(\R)$ for all $m\geq 1$ and $\sigma>0$, see page 750 of \cite{BHP1}. Moreover, it is important to emphasize that if $1\leq m\leq m'$, then $\vert\vert\vert f\vert\vert\vert_{E_{\sigma,m}}\leq \vert\vert\vert f\vert\vert\vert_{E_{\sigma,m'}}$. 



To be able to extend regularity of global solutions, we will need to first consider local well-posedness in $E_{\sigma_0, m}(\R)$ for some $\sigma_0\in(0,1]$ and $m\geq 3$, and for that purpose some estimates will be required. The first one we enunciate is the algebra property, which allows us to relate the norm of multiplication to the multiplication of norms. 

\begin{lemma}[\textsc{Algebra property}]\label{lemma2.1}
    For any positive integer $m$, $0<\sigma\leq 1$ and $\varphi,\psi\in E_{\sigma,m}(\R)$, there is a positive constant $c_m$ depending only on $m$ such that
    $$\hm{\varphi\psi}{\sigma}{m}\leq c_s\hm{\varphi}{\sigma}{m}\hm{\psi}{\sigma}{m}.$$
\end{lemma}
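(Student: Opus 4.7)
The plan is to bound $\Vert \partial_x^j(\varphi\psi)\Vert_{H^{2m}}$ factor-by-factor via Leibniz, using the fact that $H^{2m}(\R)$ is a Banach algebra (since $m\geq 1$ forces $2m\geq 2 > 1/2$), and then reduce everything to a universal combinatorial sum that absorbs the weights $\sigma^j(j+1)^2/j!$ from the norm.

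First I would apply the Leibniz rule to write
\[
\partial_x^j(\varphi\psi) = \sum_{i=0}^{j}\binom{j}{i}\partial_x^i\varphi\cdot \partial_x^{j-i}\psi,
\]
and then use the Banach-algebra estimate $\Vert fg\Vert_{H^{2m}}\leq C_m\Vert f\Vert_{H^{2m}}\Vert g\Vert_{H^{2m}}$ on each term. From the definition of $\hm{\cdot}{\sigma}{m}$, each factor satisfies
\[
\Vert\partial_x^i\varphi\Vert_{H^{2m}}\leq \frac{i!}{\sigma^i(i+1)^2}\hm{\varphi}{\sigma}{m},\qquad \Vert\partial_x^{j-i}\psi\Vert_{H^{2m}}\leq \frac{(j-i)!}{\sigma^{j-i}(j-i+1)^2}\hm{\psi}{\sigma}{m}.
\]
Substituting back and collecting, the binomial coefficient cancels against the factorials, yielding
\[
\Vert\partial_x^j(\varphi\psi)\Vert_{H^{2m}}\leq C_m\,\frac{j!}{\sigma^j}\,\hm{\varphi}{\sigma}{m}\hm{\psi}{\sigma}{m}\sum_{i=0}^{j}\frac{1}{(i+1)^2(j-i+1)^2}.
\]

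Multiplying through by $\sigma^j(j+1)^2/j!$, the target bound reduces to checking the universal inequality
\[
(j+1)^2\sum_{i=0}^{j}\frac{1}{(i+1)^2(j-i+1)^2}\leq K,
\]
with $K$ independent of $j$. This is the only genuine computation: I would split the sum at $i=\lfloor j/2\rfloor$, use that on the lower half $(j-i+1)^2\geq (j+1)^2/4$ (and symmetrically on the upper half), pull the $4/(j+1)^2$ factor out, and bound the remaining tail by the convergent series $\sum_{i\geq 0}(i+1)^{-2}=\pi^2/6$. This gives $K = 4\pi^2/3$. Taking the supremum over $j\in\mathbb{Z}_+$ and setting $c_m := C_m\cdot 4\pi^2/3$ finishes the proof.

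The main (and only) obstacle is really bookkeeping: making sure the factorials in Leibniz interact cleanly with the $\sigma^j(j+1)^2/j!$ weights, so that what remains is exactly the bi-convolution sum $\sum (i+1)^{-2}(j-i+1)^{-2}$, whose $(j+1)^2$-weighted bound is the standard estimate underlying most algebra properties of these Sobolev-type analytic scales. Note that the hypothesis $0<\sigma\leq 1$ plays no role beyond ensuring the norm is well-defined, and that the constant $c_m$ depends on $m$ solely through the Banach-algebra constant of $H^{2m}(\R)$.
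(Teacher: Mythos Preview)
Your proof is correct and in fact cleaner than the paper's. Both arguments begin with Leibniz, but diverge at the product estimate: the paper uses the refined Sobolev bound $\Vert fg\Vert_{H^{2m}}\leq c_m(\Vert f\Vert_{H^{2m}}\Vert g\Vert_{H^{2m-1}}+\Vert f\Vert_{H^{2m-1}}\Vert g\Vert_{H^{2m}})$, separates off the $l=0$ terms, and then exploits $\Vert\partial_x^{l}\psi\Vert_{H^{2m-1}}\leq\Vert\partial_x^{l-1}\psi\Vert_{H^{2m}}$ to shift an index down. This index shift produces an extra factor of $\sigma$ and leads to the combinatorial sum $\sum_{l=1}^{k}\sigma\,(k+1)^2/\big(l^3(k-l+1)^2\big)$, bounded by $8$ via the Himonas--Misio\l{}ek lemma; it is precisely here that the hypothesis $\sigma\leq 1$ is used.

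By contrast, you use the plain Banach-algebra property of $H^{2m}$ directly, which makes the factorials cancel exactly against the binomial coefficient and leaves only the symmetric convolution sum $(j+1)^2\sum_{i=0}^{j}(i+1)^{-2}(j-i+1)^{-2}$. Your splitting-at-the-midpoint argument bounding this by $4\pi^2/3$ is correct. Your route is shorter, avoids the auxiliary $H^{2m-1}$ norm and the cited lemma, and, as you noticed, never needs $\sigma\leq 1$; the paper's slightly more elaborate decomposition buys nothing here beyond a different explicit constant.
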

\begin{proof}
The proof follows closely the lines of \cite{misiolek}.
\end{proof}

Consider an equation of the form $m_t = F(u,u_x,u_{xx},u_{xxx})$ and let $g(x) = e^{-|x|}/2$ \textcolor{red}{be} the Green function of the equation $(1-\partial_x^2)u = \delta(x)$, where $\delta$ denotes de Dirac delta distribution. Then we can write the inverse of the Helmholtz operator $1-\partial_x^2$ as
$$(1-\partial_x^2)^{-1}f(x) = g\ast f(x) = \frac{1}{2}\int_{\R}e^{-|x-y|}f(y)dy.$$
With respect to the spaces $E_{\sigma,m}(\R)$, the Helmholtz operator and its inverse have some important and useful properties that will be necessary to prove local well-posedness.

\begin{lemma}\label{lemma2.2}
    For $0<\sigma'<\sigma\leq 1$, $m\geq1$ and $\varphi\in E_{\sigma,m}(\R)$, then
    \begin{align}\label{eqlemma2.2}
        &\hm{\partial_x\varphi}{\sigma'}{m} \leq \frac{1}{\sigma-\sigma'}\hm{\varphi}{\sigma}{m},\\
        &\hm{\partial_x\varphi}{\sigma}{m} \leq \hm{\varphi}{\sigma}{m+1},\\
        &\hm{(1-\partial_x^2)^{-1}\varphi}{\sigma}{m+2}=\hm{\varphi}{\sigma}{m}.
    \end{align}
\end{lemma}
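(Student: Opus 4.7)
The plan is to handle the three estimates in order of increasing difficulty: (2) and (3) are immediate consequences of Plancherel applied term by term to the supremum defining the triple norm, while (1)---the loss-of-analyticity estimate---is where the real work lies and will occupy the bulk of the argument.

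For (2), the pointwise Fourier-side bound $|\xi|^{2(j+1)}(1+\xi^2)^{2m}\leq |\xi|^{2j}(1+\xi^2)^{2(m+1)}$, which is just $\xi^2\leq 1+\xi^2$, gives $\|\partial_x^{j+1}\varphi\|_{H^{2m}}\leq \|\partial_x^j\varphi\|_{H^{2(m+1)}}$ by Plancherel. Multiplying by $\sigma^j(j+1)^2/j!$ and taking the supremum in $j$ yields (2) at once. For (3), I would set $\psi=(1-\partial_x^2)^{-1}\varphi$, so that $\hat\psi(\xi)=\hat\varphi(\xi)/(1+\xi^2)$; the two powers of $(1+\xi^2)^{-1}$ appearing in $|\hat\psi(\xi)|^2$ then cancel precisely two powers of the Sobolev weight in the relevant Plancherel integral, producing an identity of the individual terms that define each triple norm, and the equality (3) follows after taking the supremum in $j$.

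The bulk of the work is (1). From the triple-norm definition applied at index $j+1$ we have the pointwise bound
$$\|\partial_x^{j+1}\varphi\|_{H^{2m}} \leq \frac{(j+1)!}{\sigma^{j+1}(j+2)^2}\hm{\varphi}{\sigma}{m},$$
and substituting this into the definition of $\hm{\partial_x\varphi}{\sigma'}{m}$ reduces (1) to the purely numerical inequality
$$\sup_{j\geq 0}\frac{(j+1)^3}{(j+2)^2}\frac{(\sigma')^j}{\sigma^{j+1}} \leq \frac{1}{\sigma-\sigma'}.$$
To control this supremum I would use $(j+1)^3/(j+2)^2\leq j+1$ together with the real-variable maximum $\sup_{j\geq 0}(j+1)r^j\leq 1/(e\,r\,\ln(1/r))$ for $r=\sigma'/\sigma\in(0,1)$, and then invoke the convexity inequality $\ln(\sigma/\sigma')\geq(\sigma-\sigma')/\sigma$ to convert $\ln(1/r)$ into a factor of $(\sigma-\sigma')/\sigma$.

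The step I expect to be most delicate is obtaining precisely the constant $1$ claimed in (1) rather than some larger universal $C$. A naive combination of the two bounds above only delivers an estimate of the form $C\sigma/[\sigma'(\sigma-\sigma')]$, so squeezing this down to $1/(\sigma-\sigma')$ will require extracting further cancellation from the factor $(j+1)^3/(j+2)^2$ and exploiting the hypothesis $\sigma\leq 1$---for instance by writing $(j+1)^3/(j+2)^2=(j+1)-2(j+1)/(j+2)+(j+1)/(j+2)^2$ and estimating the three pieces separately against the geometric-series identity $\sum_{j\geq 0}(j+1)r^j=(1-r)^{-2}$ with $1-r=(\sigma-\sigma')/\sigma$. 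This bookkeeping, rather than any conceptual difficulty, is what I view as the one nontrivial ingredient.
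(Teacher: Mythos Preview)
Your treatment of (2) and (3) is fine and matches the paper's (which omits details for these). Your reduction of (1) to the numerical inequality
\[
\sup_{j\geq 0}\frac{(j+1)^3}{(j+2)^2}\,\frac{(\sigma')^j}{\sigma^{j+1}} \leq \frac{1}{\sigma-\sigma'}
\]
is also exactly what the paper does (the paper sets $\sigma'=\lambda\sigma$ and arrives at the same expression). Where you diverge is in closing this numerical step: the calculus bound $\sup_{j}(j+1)r^j\leq 1/(e\,r\ln(1/r))$ followed by $\ln(1/r)\geq(\sigma-\sigma')/\sigma$ only yields $\sigma/(e\sigma'(\sigma-\sigma'))$, which blows up as $\sigma'\to 0$ and cannot be rescued; and your proposed three-term decomposition together with the \emph{series} identity $\sum_j(j+1)r^j=(1-r)^{-2}$ bounds a supremum by a sum, producing an extra factor $1/(1-r)$ that is again too large.

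The paper's device is much simpler and gives the sharp constant immediately: for $r=\sigma'/\sigma\in(0,1)$ one has
\[
(j+1)r^j \leq 1 + r + \cdots + r^j < \frac{1}{1-r},
\]
since $r^j\leq r^i$ for every $0\leq i\leq j$. Combined with the trivial bound $(j+1)^3/(j+2)^2\leq j+1$ that you already noted, this yields
\[
\frac{(j+1)^3}{(j+2)^2}\,\frac{(\sigma')^j}{\sigma^{j+1}} \leq \frac{(j+1)r^j}{\sigma} < \frac{1}{\sigma(1-r)}=\frac{1}{\sigma-\sigma'}
\]
with no further bookkeeping required. So your overall plan is correct; you are simply missing this one-line geometric-series comparison, which replaces the delicate endgame you anticipated.
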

\begin{proof}
    The proofs of (2.4) and (2.5) follow immediately from the analogous estimates for Sobolev spaces and will be omitted, while the the proof of \eqref{eqlemma2.2}  requires an immediate adaptation of the proof of Lemma 2.4 (page 580) of \cite{misiolek}.
\end{proof}

In what follows, a function $u$ belongs to the space $C^{\omega}(I;X)$ if it is analytic in the interval $I$ as a function of $t$ and $u(t,\cdot)$ belongs to $X$. We will be interested in $C^{\omega}(I;G^{\sigma,s}(\R)), C^{\omega}(I;E_{\sigma,m}(\R))$ and $C^{\omega}(I;A(r))$. In the case where $u\in C^{\omega}(I\times \R)$, then $u(t,x)$ is analytic for $(t,x)\in I\times \R$.

The final result of this section, called Autonomous Ovsyannikov Theorem, will be used in the next section to prove Proposition \ref{teo1.4}. Its proof uses a very classical fixed point argument and follows closely the ideas in \cite{BHP,chines}, see also \cite{bao}.

\begin{proposition}[\textsc{Autonomous Ovsyannikov Theorem}]\label{teo1.5}
Let $X_{\delta}$ be a scale of decreasing Banach spaces for $0<\delta \leq 1$, that is, $X_\delta \subset X_{\delta'}, \Vert \cdot \Vert_{\delta'}\leq \Vert \cdot \Vert_{\delta}, 0<\delta'<\delta\leq 1,$ and consider the Cauchy problem
\begin{align}\label{1.0.8}
    \begin{cases}
    \displaystyle{\frac{du}{dt} = G(u(t))},\\
    u(0) = u_0.
    \end{cases}
\end{align}
Given $\delta_0\in(0,1]$ and $u_0\in X_{\delta_0}$, assume that $G$ satisfies the following conditions:
\begin{enumerate}
    \item For $0<\delta'<\delta<\delta_0$, $R>0$ and $a>0$, if the function $t\mapsto u(t)$ is holomorphic on $\{t\in \mathbb{C}; 0<|t|<a(\delta_0-\delta)$ with values in $X_{\delta}$ and $\sup\limits_{t<a(\delta_0-\delta)}\Vert u-u_0\Vert_{\delta}<R$, then the function $t\mapsto G(t,u(t))$ is holomorphic on the same set with values in $X_{\delta'}$.
    
    \item $G:X_{\delta}\rightarrow X_{\delta'}$ is well defined for any $0<\delta'<\delta<\delta_0$ and for any $R>0$ and  $u,v\in B(u_0,R)\subset X_{\delta}$, there exist positive constants $L$ and $M$ depending only on $u_0$ and $R$ such that
    \begin{align*}
        &\Vert G(u) - G(v)\Vert_{\delta'}\leq \frac{L}{\delta-\delta'}\Vert u-v\Vert_{\delta},\quad \Vert G(u_0)\Vert_{\delta}\leq \frac{M}{\delta_0-\delta},
    \end{align*}
\end{enumerate}
$0<\delta<\delta_0$. Then for
\begin{align}\label{lifespanAOT}
    T = \frac{R}{16LR+8M}
\end{align}
the initial value problem \eqref{1.0.8} has a unique solution $u\in C^{\omega}([0,T(\delta_0-\delta)),X_{\delta})$, for every $\delta\in (0,\delta_0)$, satisfying
\begin{align}\label{sup}
    \sup\limits_{|t|<T(\delta_0-\delta)}\Vert u(t)-u_0\Vert_{\delta}<R,\quad 0<\delta<\delta_0.
\end{align}
\end{proposition}

\section{Local well-posedness in the Himonas-Misiolek space}\label{sec3}

In this section we want to prove Proposition \ref{teo1} by making use of Autonomous Ovsyannikov Theorem. Before doing so in the next subsections, observe that the embeddings $E_{\sigma,m}(\R)\hookrightarrow E_{\sigma',m}(\R)$, for $0<\sigma'<\sigma$, guarantee that the function $$H(u) = - \partial_x\left[\frac{u^{k+1}}{k+1} + \frac{3}{2}(1-\partial_x^2)^{-1}(ku^{k-1}u_x^2)\right] + (1-\partial_x^2)^{-1}\left[\frac{k(k-1)}{2}u^{k-2}u_x^3\right],$$ taken as the right-hand side of \eqref{nonlocal}, is a well-defined function from $E_{\sigma,m}(\R)$ to $E_{\sigma',m}(\R)$ for every choice of $k$. Moreover, Condition 1 for the Autonomous Ovsyannikov Theorem is trivially satisfied. Therefore, it remains to prove Condition 2.

The proof of Proposition \ref{teo1} will be given in two parts. First we will separately prove the case where $k=1$, and then proceed to the case $k>1$. We would like to point out that Proposition \ref{teo1} holds for any positive choice of the parameter $k$, which then recovers the case $b=0$ for the $b$-equation.

\subsection{Proof for $k=1$}

Consider the function $F(u)$ given by \eqref{1.0.7} with $k=1$.
\begin{proposition}\label{prop1}
    Given $\sigma_0\in(0,1]$, $u_0\in E_{\sigma_0,m}(\R)$, with $m\geq3$, and $\sigma \in (0,\sigma_0)$, there exists a positive constant $M$ that depends only on $m$ and $u_0$ such that
    $$\hm{F(u_0)}{\sigma}{m}\leq \frac{M}{\sigma_0-\sigma}.$$
\end{proposition}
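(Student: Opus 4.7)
The plan is to apply the first estimate of Lemma \ref{lemma2.2} to extract a single factor $\frac{1}{\sigma_0-\sigma}$ from the outermost $\partial_x$ in $F(u_0)$, and then to bound the remaining expression at level $\sigma_0$ by $\hm{u_0}{\sigma_0}{m}^2$ using the algebra property together with the Helmholtz estimate of Lemma \ref{lemma2.2}. Writing
$$F(u_0) = -\frac{1}{2}\partial_x(u_0^2) - \frac{3}{2}\partial_x[(1-\partial_x^2)^{-1}u_{0,x}^2],$$
the triangle inequality combined with \eqref{eqlemma2.2} applied with $\sigma'=\sigma$ and $\sigma=\sigma_0$ would give
\begin{align*}
\hm{F(u_0)}{\sigma}{m} \leq \frac{1}{2(\sigma_0-\sigma)}\left[\hm{u_0^2}{\sigma_0}{m} + 3\hm{(1-\partial_x^2)^{-1}u_{0,x}^2}{\sigma_0}{m}\right].
\end{align*}
For the first bracketed term, Lemma \ref{lemma2.1} yields directly $\hm{u_0^2}{\sigma_0}{m}\leq c_m\hm{u_0}{\sigma_0}{m}^2$.

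The second term is the delicate one. A naive application of the algebra property would force one to control $\hm{u_{0,x}}{\sigma_0}{m}$, which under the hypothesis $u_0\in E_{\sigma_0,m}(\R)$ is not directly available: the second estimate of Lemma \ref{lemma2.2} would require $\hm{u_0}{\sigma_0}{m+1}$, and using the first estimate again would cost an extra $\frac{1}{\sigma_0-\sigma}$ and destroy the bound. The key observation is that the two-derivative gain of $(1-\partial_x^2)^{-1}$, read by substituting $m\mapsto m-2$ in the third estimate of Lemma \ref{lemma2.2} (permissible because $m\geq 3$), allows one to lower the Sobolev index of the $E$-norm by two, namely
$$\hm{(1-\partial_x^2)^{-1}u_{0,x}^2}{\sigma_0}{m} = \hm{u_{0,x}^2}{\sigma_0}{m-2}.$$
Since $m-2\geq 1$, I would then use Lemma \ref{lemma2.1} at level $m-2$, followed by the second estimate of Lemma \ref{lemma2.2} and the monotonicity in $m$, to obtain
$$\hm{u_{0,x}^2}{\sigma_0}{m-2} \leq c_{m-2}\hm{u_{0,x}}{\sigma_0}{m-2}^2 \leq c_{m-2}\hm{u_0}{\sigma_0}{m-1}^2 \leq c_{m-2}\hm{u_0}{\sigma_0}{m}^2.$$

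Combining the two bounds produces the claim with the explicit constant $M=\tfrac{1}{2}(c_m+3c_{m-2})\hm{u_0}{\sigma_0}{m}^2$, which depends only on $m$ and $u_0$. The main obstacle is recognising that one must exploit the two-derivative smoothing of the inverse Helmholtz operator by lowering the Sobolev index inside the $E_{\sigma_0,m}$-norm, rather than by a commutator manipulation or by a further application of \eqref{eqlemma2.2} to the inner $\partial_x$; this is precisely what prevents a spurious factor $\frac{1}{(\sigma_0-\sigma)^2}$ from appearing and is what makes the hypothesis $m\geq 3$ enter the proof.
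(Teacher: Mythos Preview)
Your proof is correct and follows essentially the same route as the paper's: split $F(u_0)$ into the two pieces, spend the single factor $(\sigma_0-\sigma)^{-1}$ on the outer $\partial_x$ via \eqref{eqlemma2.2}, use the third identity of Lemma~\ref{lemma2.2} with $m\mapsto m-2$ to absorb the two derivatives in $u_{0,x}^2$, and close with the algebra property together with $\hm{\partial_x u_0}{\sigma_0}{m-2}\leq \hm{u_0}{\sigma_0}{m}$. The only cosmetic differences are that the paper commutes $\partial_x$ past $(1-\partial_x^2)^{-1}$ before applying the Helmholtz identity and lumps all algebra constants into a single $c_m$, arriving at $M=2c_m\hm{u_0}{\sigma_0}{m}^2$, whereas you keep track of $c_{m-2}$ separately.
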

\begin{proof}
For $u_0\in E_{\sigma_0,m}(\R)$, write $$F(u_0) = - \frac{1}{2}\partial_x\left[u_0^{2}+ 3(1-\partial_x^2)^{-1}(\partial_xu_0)^2\right].$$

Using the triangle inequality, Lemma \ref{lemma2.2} and Lemma \ref{lemma2.1}, we obtain
\begin{align*}
\hm{F(u_0)}{\sigma}{m}\leq& \frac{1}{2}\hm{\partial_x u_0^2}{\sigma}{m} + \frac{3}{2}\hm{(1-\partial_x^2)^{-1}\partial_x(\partial_xu_0)^2}{\sigma}{m}\\
\leq& \frac{1}{2}\frac{c_m}{\sigma_0-\sigma}\hm{u_0}{\sigma_0}{m}^2 + \frac{3}{2}\frac{c_m}{\sigma_0-\sigma}\hm{\partial_x u_0}{\sigma_0}{m-2}^2\\
\leq&\frac{2c_m}{\sigma_0-\sigma}\hm{u_0}{\sigma_0}{m}^2 = \frac{M}{\sigma_0-\sigma},
\end{align*}
where $M=2c_m \hm{u_0}{\sigma_0}{m}^2$, finishing the proof.
\end{proof}

\begin{proposition}\label{prop2}
    Let $R>0$ and $\sigma_0\in(0,1]$. Given $u_0\in E_{\sigma_0,m}(\R)$, with $m\geq 3$, and $0<\sigma'<\sigma<\sigma_0$, if $u,v\in E_{\sigma,m}(\R)$ are such that
    $$\hm{u-u_0}{\sigma}{m}<R,\quad \hm{v-u_0}{\sigma}{m}<R,$$
    then there exists a positive constant $L$ that depends only on $m$, $u_0$ and $R$ such that
    $$\hm{F(u)-F(v)}{\sigma'}{m}\leq \frac{L}{\sigma-\sigma'}\hm{u-v}{\sigma}{m}.$$
\end{proposition}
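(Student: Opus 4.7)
\textbf{Proof plan for Proposition \ref{prop2}.} The strategy is to linearise $F(u)-F(v)$ in $w:=u-v$ using the identities $u^{2}-v^{2}=(u-v)(u+v)$ and $u_x^{2}-v_x^{2}=(u_x-v_x)(u_x+v_x)$, and then to combine the three ingredients provided by Section \ref{sec2}: the derivative-loss inequality \eqref{eqlemma2.2}, the two-derivative gain of the Helmholtz inverse, and the algebra property of Lemma \ref{lemma2.1}. Starting from \eqref{F1} one writes
$$
F(u)-F(v)=-\tfrac{1}{2}\partial_x\left[w\,(u+v)+3(1-\partial_x^{2})^{-1}\bigl(w_x(u_x+v_x)\bigr)\right],
$$
so that the entire $1/(\sigma-\sigma')$ loss will be produced by a single application of the outer $\partial_x$.

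First I would use \eqref{eqlemma2.2} of Lemma \ref{lemma2.2} to pull the outer $\partial_x$ out, thereby reducing the problem to estimating the $E_{\sigma,m}$-norm of the bracketed expression. The term $w(u+v)$ is then controlled directly by the algebra property of Lemma \ref{lemma2.1}. For the nonlocal term I would invoke the third identity of Lemma \ref{lemma2.2} in the form $\hm{(1-\partial_x^{2})^{-1}\varphi}{\sigma}{m}=\hm{\varphi}{\sigma}{m-2}$ (legitimate because $m\geq 3$), apply the algebra property at index $m-2$, and finally bound $\hm{w_x}{\sigma}{m-2}\leq \hm{w}{\sigma}{m-1}\leq \hm{w}{\sigma}{m}$ using the second estimate of Lemma \ref{lemma2.2} together with monotonicity of the norm in $m$; the analogous bound applies to $u_x+v_x$. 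Since $u,v$ lie in the ball of radius $R$ around $u_0$ in $E_{\sigma,m}(\R)$ and the embedding $E_{\sigma_0,m}(\R)\hookrightarrow E_{\sigma,m}(\R)$ gives $\hm{u_0}{\sigma}{m}\leq \hm{u_0}{\sigma_0}{m}$, one obtains $\hm{u+v}{\sigma}{m}\leq 2(\hm{u_0}{\sigma_0}{m}+R)$, so the resulting constant $L$ depends only on $m$, $u_0$ and $R$, as required.

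The main obstacle is bookkeeping: the factor $1/(\sigma-\sigma')$ must appear exactly once, because the hypotheses of Theorem \ref{teo1.5} tolerate only a single simple pole in $\sigma-\sigma'$. This is precisely why the nonlocal term needs to be arranged so that the outer $\partial_x$ absorbs all of the derivative loss, while the two inner $\partial_x$ derivatives acting on $w_x(u_x+v_x)$ are exactly compensated by the two-derivative regularity gain of $(1-\partial_x^{2})^{-1}$. The assumption $m\geq 3$ enters here to guarantee that after this compensation the reduced index $m-2\geq 1$ still lies in the range in which the algebra property of Lemma \ref{lemma2.1} is available.
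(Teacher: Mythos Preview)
Your proposal is correct and follows essentially the same approach as the paper's proof: both linearise $F(u)-F(v)$ via the factorisations $u^2-v^2=(u-v)(u+v)$ and $u_x^2-v_x^2=(\partial_x(u-v))(\partial_x(u+v))$, use Lemma \ref{lemma2.2} to produce the single $1/(\sigma-\sigma')$ loss from the outer $\partial_x$, absorb the inner $\partial_x$'s of the nonlocal term via the Helmholtz gain down to index $m-2$, and then apply the algebra property together with the bound $\hm{u+v}{\sigma}{m}<2(R+\hm{u_0}{\sigma_0}{m})$. The only cosmetic difference is that the paper applies the triangle inequality first and treats the two terms separately, while you pull the outer $\partial_x$ out of the bracket once; the resulting constant $L=4c_m(R+\hm{u_0}{\sigma_0}{m})$ is the same.
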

\begin{proof}
From the triangle inequality, we have
$$\hm{F(u)-F(v)}{\sigma'}{m}\leq \frac{1}{2}\hm{\partial_x(u^2-v^2)}{\sigma'}{m} + \frac{3}{2}\hm{\partial_x(1-\partial_x^2)^{-1}(u_x^2-v_x^2)}{\sigma'}{m}.$$
By observing that $$u^2 - v^2 = (u-v)(u+v),\quad u_x^2 - v_x^2 = [\partial_x (u-v)][\partial_x (u+v)],$$
Lemma \ref{lemma2.2} and Lemma \ref{lemma2.1} yield
\begin{align*}
    \hm{F(u)-F(v)}{\sigma'}{m}\leq& \frac{1}{2}\frac{c_m}{\sigma-\sigma'}\hm{u-v}{\sigma}{m}\hm{u+v}{\sigma}{m}\\
    &+ \frac{3}{2} \frac{c_m}{\sigma-\sigma'}\hm{\partial_x(u-v)}{\sigma}{m-2}\hm{\partial_x(u+v)}{\sigma}{m-2}\\
    \leq& 2\frac{c_m}{\sigma-\sigma'}\hm{u-v}{\sigma}{m}\hm{u+v}{\sigma}{m},
\end{align*}
where in the last inequality we used the fact that $\hm{\partial_x(u-v)}{\sigma}{m-2}\leq \hm{u-v}{\sigma}{m}$ and an analogous estimate for $\partial_x(u+v)$.

Since $$\hm{u+v}{\sigma}{m}\leq \hm{u-u_0}{\sigma}{m} + \hm{v-u_0}{\sigma}{m} + 2 \hm{u_0}{\sigma}{m} <2(R +\hm{u_0}{\sigma_0}{m}),$$
we conclude that for $L = 4c_m(R + \hm{u_0}{\sigma_0}{m})$ the bound
\begin{align*}
    \hm{F(u)-F(v)}{\sigma'}{m} \leq \frac{L}{\sigma-\sigma'}\hm{u-v}{\sigma}{m}
\end{align*}
holds for $m\geq 2$ and $0<\sigma'<\sigma<\sigma_0$, completing the proof.
\end{proof}

We are now in conditions to finalize the proof of Proposition \ref{teo1} for $k=1$. For this purpose, observe that in Proposition \ref{prop1} we have $M = 2c_m \hm{u_0}{\sigma_0}{m}^2,$ while in Proposition \ref{prop2} we have $L = 4c_m (R+\hm{u_0}{\sigma_0}{m})$ for any $R>0$. Letting $C = 4c_m,$ then we can write $L = C(R+\hm{u_0}{\sigma_0}{m})$ and $M = \frac{C}{2}\hm{u_0}{\sigma_0}{m}^2.$

From propositions \ref{prop1} and \ref{prop2}, the conditions for the autonomous Ovsyannikov theorem are satisfied and, therefore, for $m\geq 3$ and  $T$ given by \eqref{lifespanAOT} 
there exists a unique solution $u$ to the Cauchy problem \eqref{nonlocal} which for every $\sigma \in (0,\sigma_0)$ is a holomorphic function in $D(0,T(\sigma_0-\sigma))$ to $E_{\sigma,m}(\R)$ and satisfies \eqref{sup}.
Taking $R = \hm{u_0}{\sigma_0}{m}$ yields
$$T = \frac{1}{144 c_m}\times \frac{1}{\hm{u_0}{\sigma_0}{m}}$$
and the proof of existence and uniqueness of Proposition \ref{teo1} is finished for $k=1$.

\subsection{Proof for $k> 1$}\label{sec3.2}

For the case $k>1$, we will make use of the simple algebraic inequality
\begin{align}\label{4.0.1}
    3+2^{k-3}<2^k.
\end{align}

Consider the function $F(u)$ given by \eqref{1.0.7}. Similarly to Proposition \ref{prop1} and Proposition \ref{prop2} for the case $k=1$, we will estimate $\hm{F(u_0)}{\sigma}{m}$ and $\hm{F(u)-F(v)}{\sigma'}{m}$ for $m\geq 3$ and $0<\sigma'<\sigma<\sigma_0\leq 1$.

\begin{proposition}\label{prop3}
     Given $\sigma_0\in(0,1]$, $u_0\in E_{\sigma_0,m}(\R)$, with $m\geq3$, and $\sigma \in (0,\sigma_0)$, there exists a positive constant $M$ that depends only on $m$ and $u_0$ such that
    $$\hm{F(u_0)}{\sigma}{m}\leq \frac{M}{\sigma_0-\sigma}.$$
\end{proposition}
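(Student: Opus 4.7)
The plan is to mirror the proof of Proposition \ref{prop1}. First decompose via the triangle inequality
\begin{align*}
\hm{F(u_0)}{\sigma}{m}\leq & \tfrac{1}{k+1}\hm{\partial_x u_0^{k+1}}{\sigma}{m} + \tfrac{3k}{2}\hm{\partial_x(1-\partial_x^2)^{-1}(u_0^{k-1}(u_0)_x^2)}{\sigma}{m} \\
& + \tfrac{k(k-1)}{2}\hm{(1-\partial_x^2)^{-1}(u_0^{k-2}(u_0)_x^3)}{\sigma}{m},
\end{align*}
and estimate the three pieces in turn.

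For the first two pieces, I will apply \eqref{eqlemma2.2} of Lemma \ref{lemma2.2} to trade the outer $\partial_x$ for a factor $1/(\sigma_0-\sigma)$ while moving from $\hm{\cdot}{\sigma}{m}$ to $\hm{\cdot}{\sigma_0}{m}$. For the second piece, I additionally use the identity (2.11) to strip off the Helmholtz inverse at the cost of lowering the Sobolev index from $m$ to $m-2$, which is permissible because $m\geq 3$. Then the algebra property of Lemma \ref{lemma2.1}, applied $k$ times to each product of $k+1$ factors of the form $u_0^{a}(u_0)_x^{b}$ (with $a+b=k+1$), produces a constant $c_m^k$ and a product of norms $\hm{u_0}{\sigma_0}{m-2}^{a}\hm{(u_0)_x}{\sigma_0}{m-2}^{b}$. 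Each derivative factor is absorbed via (2.10), i.e.\ $\hm{(u_0)_x}{\sigma_0}{m-2}\leq \hm{u_0}{\sigma_0}{m-1}\leq \hm{u_0}{\sigma_0}{m}$, yielding a clean $\hm{u_0}{\sigma_0}{m}^{k+1}$.

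The third piece has no outer $\partial_x$ to generate a $1/(\sigma_0-\sigma)$, so after using (2.11) and the algebra property it is only bounded by $\tfrac{k(k-1)}{2}c_m^k\hm{u_0}{\sigma_0}{m}^{k+1}$ without such a factor. This is the only point where the argument differs from the $k=1$ case, and the fix is the standing assumption $\sigma_0\leq 1$: since $0<\sigma<\sigma_0\leq 1$ we have $\sigma_0-\sigma<1$ and hence $1<1/(\sigma_0-\sigma)$, so the estimate may be harmlessly inflated by that factor. Summing the three contributions gives
\begin{align*}
\hm{F(u_0)}{\sigma}{m}\leq \frac{1}{\sigma_0-\sigma}\left[\tfrac{1}{k+1}+\tfrac{3k}{2}+\tfrac{k(k-1)}{2}\right]c_m^k \hm{u_0}{\sigma_0}{m}^{k+1},
\end{align*}
which is exactly the structural coefficient appearing in $\kappa_m$ in Theorem \ref{teo1}, so taking $M$ to be this bracketed expression times $\hm{u_0}{\sigma_0}{m}^{k+1}$ closes the proof.

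The main obstacle is purely bookkeeping: keeping track of which of (2.9), (2.10), (2.11), and Lemma \ref{lemma2.1} to apply in which order so that the final estimate is a pure power of $\hm{u_0}{\sigma_0}{m}$ and so that the three coefficients combine into exactly the constant prescribed for $\kappa_m$. The hypothesis $m\geq 3$ guarantees that every intermediate Sobolev index ($m$, $m-1$, and $m-2$) lies in the range where Lemmas \ref{lemma2.1} and \ref{lemma2.2} apply, and nothing beyond these manipulations is needed.
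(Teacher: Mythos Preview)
Your proposal is correct and follows essentially the same decomposition and use of Lemmas \ref{lemma2.1} and \ref{lemma2.2} as the paper's proof. The only minor variation is in the third term: the paper obtains the factor $1/(\sigma_0-\sigma)$ by applying \eqref{eqlemma2.2} to one of the three $\partial_x u_0$ factors (passing from $\sigma$ to $\sigma_0$), whereas you leave all derivative factors at level $\sigma_0$ via (2.9) and instead invoke $\sigma_0-\sigma<1$ to insert the missing denominator artificially. Both routes are valid and yield the same constant $M=\bigl[\tfrac{1}{k+1}+\tfrac{3k}{2}+\tfrac{k(k-1)}{2}\bigr]c_m^k\hm{u_0}{\sigma_0}{m}^{k+1}$.
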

\begin{proof}
    Given $u_0\in E_{\sigma_0,m}(\R)$, using \eqref{1.0.7} we have
    \begin{align*}
    \hm{F(u_0)}{\sigma}{m} \leq&\frac{1}{k+1}\hm{\partial_xu_0^{k+1}}{\sigma}{m} +\frac{3k}{2}\hm{\partial_x(1-\partial_x^2)^{-1}u_0^{k-1}(\partial_xu_0)^2}{\sigma}{m}\\
    &+\frac{k(k-1)}{2}\hm{(1-\partial_x^2)^{-1}u_0^{k-2}(\partial_xu_0)^3}{\sigma}{m}.
    \end{align*}
    From Lemma \ref{lemma2.2} and the algebra property, we can write
    \begin{align*}
        \hm{\partial_x u_0^{k+1}}{\sigma}{m} \leq& \frac{1}{\sigma_0-\sigma}\hm{u_0^{k+1}}{\sigma_0}{m}\leq \frac{c_m^k}{\sigma_0-\sigma}\hm{u_0}{\sigma_0}{m}^{k+1},\\
        \hm{\partial_x(1-\partial_x^2)^{-1}u_0^{k-1}(\partial_xu_0)^2}{\sigma}{m}\leq& \frac{1}{\sigma_0-\sigma}\hm{u_0^{k-1}(\partial_xu_0)^2}{\sigma_0}{m-2}\leq \frac{c_m^k}{\sigma_0-\sigma}\hm{u_0}{\sigma_0}{m}^{k+1},\\
        \hm{(1-\partial_x^2)^{-1}u_0^{k-2}(\partial_xu_0)^3}{\sigma}{m}\leq& \hm{u_0^{k-2}(\partial_xu_0)^3}{\sigma}{m-2}\leq \frac{c_m^k}{\sigma_0-\sigma}\hm{u_0}{\sigma_0}{m}^{k+1}.
    \end{align*}
    Thus,
    \begin{align*}
        \hm{F(u_0)}{\sigma}{m}\leq& \left[\frac{1}{k+1}+\frac{3k}{2}+\frac{k(k-1)}{2}\right]\frac{c_m^k}{\sigma_0-\sigma}\hm{u_0}{\sigma_0}{m}^{k+1}.
    \end{align*}
    By letting $$M = \left[\frac{1}{k+1}+\frac{3k}{2}+\frac{k(k-1)}{2}\right]c_m^k\hm{u_0}{\sigma_0}{m}^{k+1},$$
    we finally conclude that $$\hm{F(u_0)}{\sigma}{m}\leq \frac{M}{\sigma_0-\sigma},$$
    for $0<\sigma<\sigma_0$, and the result is proven.
\end{proof}

Before proceeding with the next estimate, it is necessary to state a result that only requires the triangle inequality and successive applications of the algebra property.
\begin{lemma}\label{lemma4.1}
    For $u,v\in E_{\sigma,m}(\R)$, with $\sigma>0$ and $m\geq 1$, let $$f_k(u,v) = \sum\limits_{j=0}^ku^jv^{k-j}.$$
    Then there exists a positive constant $c_m$ depending only on $m$ such that $$\hm{f_k(u,v)}{\sigma}{m}\leq c_m^{k-1}(\hm{u}{\sigma}{m} + \hm{v}{\sigma}{m})^k.$$
\end{lemma}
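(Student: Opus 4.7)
The proof will follow by combining the triangle inequality with iterated applications of the algebra property established in Lemma \ref{lemma2.1}, together with a comparison to a binomial expansion. First I plan to apply the triangle inequality to the defining sum to obtain
$$\hm{f_k(u,v)}{\sigma}{m} \leq \sum_{j=0}^k \hm{u^j v^{k-j}}{\sigma}{m}.$$

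The central estimate I then need is
$$\hm{u^j v^{k-j}}{\sigma}{m} \leq c_m^{k-1}\, \hm{u}{\sigma}{m}^j\, \hm{v}{\sigma}{m}^{k-j},$$
which I would prove by a straightforward induction on $k$. Each monomial $u^j v^{k-j}$ is a product of $k$ elements of $E_{\sigma,m}(\R)$ (the function $u$ repeated $j$ times and the function $v$ repeated $k-j$ times), and each time I peel off one factor via Lemma \ref{lemma2.1} I pick up exactly one constant $c_m$; since a product of $k$ factors is built from $k-1$ binary multiplications, the accumulated constant is precisely $c_m^{k-1}$.

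To finish, I would recognise that the resulting sum is dominated by a binomial expansion. Since $\binom{k}{j} \geq 1$ for every $0 \leq j \leq k$, the elementary numerical inequality
$$\sum_{j=0}^k \hm{u}{\sigma}{m}^j\, \hm{v}{\sigma}{m}^{k-j} \leq \sum_{j=0}^k \binom{k}{j} \hm{u}{\sigma}{m}^j\, \hm{v}{\sigma}{m}^{k-j} = \bigl(\hm{u}{\sigma}{m} + \hm{v}{\sigma}{m}\bigr)^k$$
holds trivially, and combined with the previous estimate this yields the claimed bound. No step here is genuinely difficult; the only bookkeeping subtlety is confirming that the correct exponent of $c_m$ is $k-1$ and not $k$, reflecting the fact that Lemma \ref{lemma2.1} is applied once per multiplication and a product of $k$ terms requires only $k-1$ multiplications.
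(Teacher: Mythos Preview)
Your proposal is correct and follows exactly the approach the paper indicates: triangle inequality followed by successive applications of the algebra property (Lemma~\ref{lemma2.1}), with the binomial comparison to close the argument. The paper omits the details entirely, so your write-up is in fact more complete than what appears there; the exponent $k-1$ on $c_m$ is justified precisely as you say, since a $k$-fold product is assembled via $k-1$ binary multiplications.
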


We shall now proceed with the last crucial estimate required to make use of the Autonomous Ovsyannikov Theorem and finish the proof of Proposition \ref{teo1}.

\begin{proposition}\label{prop4}
    Let $R>0$ and $\sigma_0\in(0,1]$. Given $u_0\in E_{\sigma_0,m}(\R)$, with $m\geq 3$ and $0<\sigma'<\sigma<\sigma_0$, if $u,v\in E_{\sigma,m}(\R)$ are such that
    $$\hm{u-u_0}{\sigma}{m}<R,\quad \hm{v-u_0}{\sigma}{m}<R,$$
    then there exists a positive constant $L$ that depends only on $m$, $u_0$ and $R$ such that
    $$\hm{F(u)-F(v)}{\sigma'}{m}\leq \frac{L}{\sigma-\sigma'}\hm{u-v}{\sigma}{m}.$$
\end{proposition}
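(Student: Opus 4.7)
The plan is to mimic the structure of Proposition 4.2 but with a careful bookkeeping of the three nonlinear terms present in $F(u)$ for arbitrary $k>1$. First I would split by the triangle inequality:
\begin{align*}
\hm{F(u)-F(v)}{\sigma'}{m} \leq & \frac{1}{k+1}\hm{\partial_x(u^{k+1}-v^{k+1})}{\sigma'}{m} \\
& + \frac{3k}{2}\hm{\partial_x(1-\partial_x^2)^{-1}(u^{k-1}u_x^2-v^{k-1}v_x^2)}{\sigma'}{m} \\
& + \frac{k(k-1)}{2}\hm{(1-\partial_x^2)^{-1}(u^{k-2}u_x^3-v^{k-2}v_x^3)}{\sigma'}{m}.
\end{align*}
The three polynomial differences I would factor by the standard telescoping identities
\begin{align*}
u^{k+1}-v^{k+1} &= (u-v)\,f_k(u,v),\\
u^{k-1}u_x^2-v^{k-1}v_x^2 &= (u-v)\,f_{k-2}(u,v)\,u_x^2 + v^{k-1}(u_x-v_x)(u_x+v_x),\\
u^{k-2}u_x^3-v^{k-2}v_x^3 &= (u-v)\,f_{k-3}(u,v)\,u_x^3 + v^{k-2}(u_x-v_x)\,f_2(u_x,v_x),
\end{align*}
with $f_j$ as in Lemma 4.1, interpreting the $k=2$ case by noting that $u^{k-2}=v^{k-2}=1$ so that the first piece drops.

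Next I would apply Lemma 2.2 to pay a factor $1/(\sigma-\sigma')$ for each outer $\partial_x$ and move into a space of higher index, while using the fact that $(1-\partial_x^2)^{-1}$ gains two indices of regularity at no cost in $\sigma$. To each resulting product I would apply the algebra property of Lemma 2.1 as many times as needed, then invoke Lemma 4.1 to estimate $\hm{f_j(u,v)}{\sigma}{m}\leq c_m^{j-1}(\hm{u}{\sigma}{m}+\hm{v}{\sigma}{m})^j$, and use the elementary bound $\hm{\partial_x w}{\sigma}{m-2}\leq \hm{w}{\sigma}{m}$. The outcome will be an inequality of the form
\[
\hm{F(u)-F(v)}{\sigma'}{m} \leq \frac{C_k}{\sigma-\sigma'}\Bigl[\tfrac{1}{k+1}+\tfrac{3k}{2}+\tfrac{k(k-1)}{2}\Bigr]c_m^k\bigl(\hm{u}{\sigma}{m}+\hm{v}{\sigma}{m}\bigr)^k\hm{u-v}{\sigma}{m},
\]
where $C_k$ is a combinatorial constant collecting the finitely many monomials in the three identities above.

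Finally, using the hypothesis $\hm{u-u_0}{\sigma}{m},\hm{v-u_0}{\sigma}{m}<R$ gives $\hm{u}{\sigma}{m}+\hm{v}{\sigma}{m}\leq 2(R+\hm{u_0}{\sigma_0}{m})$, which absorbs the $k$-th power into $2^k$. Defining
\[
L = C_k\,2^k\Bigl[\tfrac{1}{k+1}+\tfrac{3k}{2}+\tfrac{k(k-1)}{2}\Bigr]c_m^k\,(R+\hm{u_0}{\sigma_0}{m})^k
\]
yields the desired bound with $L$ depending only on $m$, $u_0$ and $R$.

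The main obstacle is combinatorial rather than analytical: the algebra property must be iterated carefully so that only $c_m^k$ appears (and not a higher power) in the final constant, and the exponent of $2$ coming from summing the three contributions must match what is needed for the lifespan formula in Theorem 1.7. This is precisely where the algebraic inequality $3+2^{k-3}<2^k$ stated in (4.0.1) is used: it consolidates the three separate bounds into a single factor compatible with the $2^{2(k+2)}+8$ appearing inside $\kappa_m$ once one later chooses $R=\hm{u_0}{\sigma_0}{m}$ and computes $T=R/(16LR+8M)$ via Theorem 1.8. Small-$k$ edge cases (notably $k=2$, where $f_{-1}$ is vacuous) must be checked separately but give the same bound.
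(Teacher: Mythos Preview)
Your proposal is correct and follows essentially the same route as the paper's proof: the same triangle-inequality split into three terms, the same telescoping factorizations (you telescope in the opposite order, placing $u_x^2$ and $v^{k-1}$ where the paper has $v_x^2$ and $u^{k-1}$, but this is immaterial), and the same toolkit of Lemma~2.1, Lemma~2.2, and Lemma~4.1. You also correctly identify that the algebraic inequality~(4.0.1) is what consolidates the three separate combinatorial constants into a single $2^k$ so that the resulting $L$ matches the lifespan formula; the paper carries out exactly this bookkeeping explicitly, obtaining the factors $2^k$, $2+2^{k-2}$, and $3+2^{k-3}$ for the three terms and then absorbing the latter two into $2^k$.
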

\begin{proof}
    Given $R>0$, $\sigma_0\in(0,1]$ and $u_0\in E_{\sigma_0,m}(\R)$, with $m\geq 3$, let $0<\sigma'<\sigma<\sigma_0$. In terms \eqref{1.0.7}, we write
    \begin{align}\label{4.0.2}
    \begin{aligned}
    \hm{F(u)-F(v)}{\sigma'}{m} \leq& \frac{1}{k+1}\hm{\partial_x(u^{k+1}-v^{k+1})}{\sigma'}{m}\\
    &+ \frac{3k}{2}\hm{\partial_x(1-\partial_x^2)^{-1}(u^{k-1}u_x^2-v^{ k-1}v_x^2)}{\sigma'}{m}\\
    &+\frac{k(k-1)}{2}\hm{(1-\partial_x^2)^{-1}(u^{k-2}u_x^3-v^{k-2}v_x^3)}{\sigma'}{m}.
    \end{aligned}
    \end{align}
    Since $u^{k+1}-v^{k+1} = (u-v)f_k(u,v)$, from Lemma \ref{lemma2.2} and Lemma \ref{lemma4.1} we obtain
    \begin{align*}
        \hm{\partial_x(u^{k+1}-v^{k+1})}{\sigma'}{m}\leq& \frac{1}{\sigma-\sigma'}\hm{(u-v)f_k(u,v)}{\sigma}{m}\\
        \leq& \frac{c_m^k}{\sigma-\sigma'}\left(\hm{u}{\sigma}{m}+\hm{v}{\sigma}{m} \right)^k\hm{u-v}{\sigma}{m}.
    \end{align*}
    For the second term, write
    \begin{align*}
    u^{k-1}u_x^2 - v^{k-1}v_x^2 =u^{k-1}[u_x-v_x][u_x+v_x] + (u-v)v_x^2f_{k-2},
    \end{align*}
    which, together with the triangle inequality, the algebra property and Proposition \ref{prop2}, yield
    \begin{align*}
        \hm{\partial_x&(1-\partial_x^2)^{-1}(u^{k-1}u_x^2-v^{k-1}v_x^2)}{\sigma'}{m}\\
        \leq& \frac{1}{\sigma-\sigma'}\left(\hm{u^{k-1}(u_x-v_x)(u_x+v_x)}{\sigma}{m-2} +\hm{(u-v)v_x^2f_{k-2}(u,v)}{\sigma}{m-2}\right)\\
        \leq&\frac{c_m}{\sigma-\sigma'}\left(\hm{u^{k-1}u_x+v_x}{\sigma}{m-2}+\hm{v_x^2f_{k-2}(u,v)}{\sigma}{m-2}\right)\hm{u-v}{\sigma}{m}\\
        \leq& \frac{c_m^k}{\sigma-\sigma'}\left(\hm{u}{\sigma}{m}^{k-1}\hm{u+v}{\sigma}{m}+(\hm{u}{\sigma}{m}+\hm{v}{\sigma}{m})^{k-2}\hm{v}{\sigma}{m}^2\right)\\
        &\times\hm{u-v}{\sigma}{m}.
    \end{align*}
    From the proof of Proposition \ref{prop2} we know that $\hm{u+v}{\sigma}{m}<2(R+\hm{u_0}{\sigma_0}{m}).$ Moreover, we also have
    \begin{align}\label{4.0.3}
    \hm{u}{\sigma}{m}\leq \hm{u-u_0}{\sigma}{m}+\hm{u_0}{\sigma}{m}<R + \hm{u_0}{\sigma_0}{m},
    \end{align}
    which tells that
    \begin{align*}
        \hm{\partial_x(1-\partial_x^2)^{-1}&(u^{k-1}u_x^2-v^{k-1}v_x^2)}{\sigma'}{m}\\
        &\leq (2+2^{k-2})\frac{c_m^k}{\sigma-\sigma'} (R + \hm{u_0}{\sigma_0}{m})^k\hm{u-v}{\sigma}{m}
    \end{align*}
    and
    \begin{align*}
        \hm{\partial_x(u^{k+1}-v^{k+1})}{\sigma'}{m}\leq& 2^k\frac{c_m^k}{\sigma-\sigma'}\left(R + \hm{u_0}{\sigma_0}{m}\right)^k\hm{u-v}{\sigma}{m}.
    \end{align*}
    To deal with the third and last term on the right-hand side of \eqref{4.0.2}, observe that
    \begin{align*}
    u^{k-2}u_x^3 - v^{k-2}v_x^3 &= u^{k-2}(u_x^3-v_x^3) + (u^{k-2}-v^{k-2})v_x^3\\
    &=u^{k-2}[u_x-v_x][u_x^2+ u_x v_x + v_x^2]+(u-v)v_x^3f_{k-3}(u,v).
    \end{align*}
    Thus, we can write
    \begin{align*}
        \hm{(1-&\partial_x^2)^{-1}(u^{k-2}u_x^3 - v^{k-2}v_x^3)}{\sigma'}{m}\\
        \leq& \hm{u^{k-2}(u_x-v_x)(u_x^2+u_xv_x+v_x^2)}{\sigma'}{m-2}      +\hm{(u-v)v_x^3f_{k-3}(u,v)}{\sigma'}{m-2}\\
        \leq& c_m(\hm{u^{k-2}(u_x^2+u_xv_x+v_x^2)}{\sigma'}{m-2}\hm{\partial_x(u-v)}{\sigma'}{m-2}\\
        &+\hm{v_x^3f_{k-3}(u,v)}{\sigma'}{m-2}\hm{u-v}{\sigma'}{m-2})\\
        \leq& \frac{c_m^k}{\sigma-\sigma'}\left[\hm{u}{\sigma}{m}^k + \hm{u}{\sigma}{m}^{k-1}\hm{v}{\sigma}{m} + \hm{u}{\sigma}{m}^{k-2}\hm{v}{\sigma}{m}^2\right.\\
        &\left.+(\hm{u}{\sigma}{m}+\hm{v}{\sigma}{m})^{k-3}\hm{v}{\sigma}{m}^3\right]\hm{u-v}{\sigma}{m}.
    \end{align*}
    From the estimate \eqref{4.0.3} it is then obtained
    \begin{align*}
        \hm{(1-\partial_x^2)^{-1}&(u^{k-2}u_x^3 - v^{k-2}v_x^3)}{\sigma'}{m}\\
        \leq& (3+2^{k-3})\frac{c_m^k}{\sigma-\sigma'}(R + \hm{u_0}{\sigma_0}{m})^k\hm{u-v}{\sigma}{m}.
    \end{align*}
    Now under substitution of the respective terms in \eqref{4.0.2}, we arrive at
    \begin{align*}
        \hm{F(u)-F(v)}{\sigma}{m} \leq& \frac{c_m^k}{\sigma-\sigma'}\left[\frac{2^k}{k+1} + (2+2^{k-2})\frac{3k}{2}+(3+2^{k-3})\frac{k(k-1)}{2}\right]\\
        &\times(R+\hm{u_0}{\sigma_0}{m})^k\hm{u-v}{\sigma}{m}.
    \end{align*}
    Observe now that for $k>1$ we have $2^{k-2}<2^{k-1}$, $2\leq2^{k-1}$ and, from \eqref{4.0.1}, $3+2^{k-3}<2^k$. It means that the last inequality can be written as
    $$\hm{F(u)-F(v)}{\sigma'}{m}\leq \frac{L}{\sigma-\sigma'}\hm{u-v}{\sigma}{m},$$
    where L = $C(R+\hm{u_0}{\sigma_0}{m})^k,$ with $C = 2^k\left[\frac{1}{k+1}+\frac{3k}{2}+\frac{k(k-1)}{2}\right]c_m^k$, and the proof is finished.
\end{proof}

We will now proceed with the final part of the proof of Proposition \ref{teo1}. For $M = \frac{C}{2^k}\hm{u_0}{\sigma_0}{m}^{k+1}$ and $R=\hm{u_0}{\sigma_0}{m}$, from the Autonomous Ovsyannikov Theorem, for 
\begin{align*}
T =& \frac{R}{16LR+8M}=\frac{1}{\left[\frac{1}{k+1}+\frac{3k}{2}+\frac{k(k-1)}{2}\right](2^{2(k+2)}+8)c_m^k}\frac{1}{\hm{u_0}{\sigma_0}{m}^k},
\end{align*}
 there exists a unique solution $u$ to the Cauchy problem of \eqref{nonlocal} which, for every $\sigma\in(0,\sigma_0)$, is a holomorphic function in $D(0,T(\sigma_0-\sigma))$ into $E_{\sigma_0,m}(\R)$. Therefore, the proof of Proposition \ref{teo1} is complete for any positive integer $k$. Observe that taking $k=1$ will result in the same $T$ obtained last section, which shows that it indeed unifies both cases.

Since Lemma \ref{lemma2.1} and a similar Lemma \ref{lemma2.2} are still valid for Gevrey spaces $G^{\sigma,s}(\R)$, where $0<\sigma<\sigma'\leq \sigma_0\leq 1$ and $s>1/2$, see \cite{BHP,chines}, a repetition of the same calculations for $\sigma_0=1$ provides an analogous result for these spaces, which will be useful and is therefore stated in the next result. For an alternative proof, see Theorem 1 in \cite{BHPlocal}.

\begin{corollary}\label{cor3.1}
    Given $u_0(x):=u(0,x)\in G^{1,s}(\R)$, with $s\geq 5/2$, there exists $T>0$ such that for every $\sigma \in (0,1)$ the Cauchy problem for \eqref{nonlocal} has a unique solution $u\in C^{\omega}([0,T(1-\sigma)); G^{\sigma,s}(\R))$.
\end{corollary}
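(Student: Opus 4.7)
The plan is to apply the Autonomous Ovsyannikov Theorem (Theorem \ref{teo1.5}) to the scale $X_\delta = G^{\delta,s}(\R)$, $0<\delta\leq 1$, with $\delta_0=1$. The monotone Banach-scale hypothesis follows at once from the embedding $G^{\sigma,s}(\R)\hookrightarrow G^{\sigma',s}(\R)$ together with $\Vert\cdot\Vert_{G^{\sigma',s}}\leq\Vert\cdot\Vert_{G^{\sigma,s}}$ for $0<\sigma'<\sigma$, recalled at the top of Section \ref{sec2}. Condition~1 of Theorem~\ref{teo1.5} is automatic: the map $F$ in \eqref{1.0.7} is built from polynomial operations on $u$ and $u_x$, post-composed with $\partial_x$ and $(1-\partial_x^2)^{-1}$, and each of these preserves $t$-analyticity when the target is taken in a Gevrey space of strictly smaller $\sigma$-index (see \cite{BHP,chines}). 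Hence the whole task is to verify Condition~2, i.e., to exhibit constants $L,M>0$ such that
\begin{equation*}
\Vert F(u)-F(v)\Vert_{G^{\sigma',s}}\leq \frac{L}{\sigma-\sigma'}\Vert u-v\Vert_{G^{\sigma,s}},\qquad \Vert F(u_0)\Vert_{G^{\sigma,s}}\leq \frac{M}{1-\sigma}
\end{equation*}
for $0<\sigma'<\sigma<1$ and $u,v$ in a ball $B(u_0,R)\subset G^{\sigma,s}(\R)$.

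For this I would invoke Gevrey-space analogues of the three tools assembled for $E_{\sigma,m}(\R)$ in Section \ref{sec2}: the algebra property $\Vert fg\Vert_{G^{\sigma,s}}\leq C_s\Vert f\Vert_{G^{\sigma,s}}\Vert g\Vert_{G^{\sigma,s}}$ valid for $s>1/2$; the derivative-loss bound $\Vert\partial_x f\Vert_{G^{\sigma',s}}\leq (\sigma-\sigma')^{-1}\Vert f\Vert_{G^{\sigma,s}}$, which on the Fourier side reduces to the elementary estimate $|\xi|\,e^{-(\sigma-\sigma')|\xi|}\leq (\sigma-\sigma')^{-1}$; and the Helmholtz identity $\Vert(1-\partial_x^2)^{-1}f\Vert_{G^{\sigma,s+2}}=\Vert f\Vert_{G^{\sigma,s}}$, which is immediate from the definition of the norm. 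All three are already catalogued in \cite{BHP,chines}. With these at hand, the arguments of Propositions \ref{prop1}--\ref{prop2} (for $k=1$) and Propositions \ref{prop3}--\ref{prop4} (for $k>1$) transfer essentially line by line: one expands each of $u^{k+1}-v^{k+1}$, $u^{k-1}u_x^2-v^{k-1}v_x^2$ and $u^{k-2}u_x^3-v^{k-2}v_x^3$ via the telescoping identity of Lemma \ref{lemma4.1}, applies the algebra property, then either derivative-loss (for terms under $\partial_x$) or Helmholtz-gain (for terms under $(1-\partial_x^2)^{-1}$), and absorbs isolated spatial derivatives through $\Vert\partial_x f\Vert_{G^{\sigma,s-1}}\leq \Vert f\Vert_{G^{\sigma,s}}$.

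The only real obstacle is the index bookkeeping. The most delicate term $u^{k-2}u_x^3$ sits inside $(1-\partial_x^2)^{-1}$, so after the Helmholtz gain it is estimated in $G^{\sigma,s-2}$; the algebra property there demands $s-2>1/2$, which forces $s>5/2$ and explains why the same threshold reappears as in the $E_{\sigma,m}(\R)$ case. Once Condition~2 has been secured with $L$ and $M$ of the same structural form as in Propositions \ref{prop3}--\ref{prop4} (polynomial in $R$ and $\Vert u_0\Vert_{G^{1,s}}$), Theorem \ref{teo1.5} yields a lifespan $T = R/(16LR+8M)>0$; taking $R=\Vert u_0\Vert_{G^{1,s}}$ delivers a unique $u\in C^{\omega}([0,T(1-\sigma));G^{\sigma,s}(\R))$ for every $\sigma\in(0,1)$, which is the statement of the corollary.
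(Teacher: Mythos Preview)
Your proposal is correct and follows exactly the route the paper indicates: the paper's own argument is a one-line remark that the Gevrey analogues of Lemma~\ref{lemma2.1} and Lemma~\ref{lemma2.2} hold (with reference to \cite{BHP,chines}), so that ``a repetition of the same calculations for $\sigma_0=1$'' establishes the corollary, and you have spelled out precisely those calculations. Your discussion of the threshold $s>5/2$ arising from the algebra property at index $s-2$ is also on target (note that the paper writes $s\geq 5/2$ in the corollary but $s>5/2$ elsewhere; your strict inequality is the one the argument actually supports).
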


\section{Global well-posedness in Sobolev spaces}

In this section we will prove Proposition \ref{teo.global} and the proof will be based on the local well-posedness in Sobolev spaces, a certain estimate for the $H^3(\R)$ norm of the local solution and the Sobolev embedding theorem. It is worth mentioning that the proof for the case $s=3$ is already proven in \cite{Eu1} and, therefore, presented here just for the sake of completeness. Firstly we enuntiate a result due to Yan \cite{Yan}, see also Himonas and Holliman \cite{HH}.

\begin{lemma}\label{lemma5.1'}
    Suppose that $u_0\in H^s(\R), s>3/2$. There exist a maximal time of existence $T>0$ and a unique solution $u\in C([0,T);H^s(\R))\cap C^1([0,T);H^{s-1}(\R))$ of \eqref{nonlocal}--\eqref{1.0.7} with $k=1$. Moreover, the solution $u$ satisfies the following energy estimate:
    \begin{align}\label{estimate}
    \frac{d}{dt}\Vert u\Vert_{H^ s} \leq c_s \Vert u\Vert_{C^1}\Vert u\Vert_{H^s},
    \end{align}
    for some positive constant $c_s$. Finally, the data-to-solution map $u(0)\mapsto u(t)$ is continuous.
\end{lemma}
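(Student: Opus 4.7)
The plan is to prove Lemma \ref{lemma5.1'} by the classical Kato-type quasilinear scheme combined with Friedrichs mollifiers, following the approaches of \cite{HH,Yan}. The heart of the argument is the energy estimate \eqref{estimate}, which then drives existence, uniqueness, and continuous dependence through standard machinery.

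First I would mollify the equation by introducing a family of Friedrichs mollifiers $J_\epsilon$ and considering the regularized Cauchy problem
\begin{equation*}
\partial_t u^\epsilon = -\tfrac{1}{2}J_\epsilon \partial_x(J_\epsilon u^\epsilon)^2 - \tfrac{3}{2}J_\epsilon \partial_x(1-\partial_x^2)^{-1}(J_\epsilon \partial_x u^\epsilon)^2, \qquad u^\epsilon(0)=J_\epsilon u_0.
\end{equation*}
For each $\epsilon>0$ this is a locally Lipschitz ODE on $H^s(\R)$, so the Picard theorem produces a unique smooth-in-time solution $u^\epsilon$ on a maximal interval. To upgrade to a uniform lifespan I would derive the energy estimate at the mollified level, which is the core computation. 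Applying $\Lambda^s=(1-\partial_x^2)^{s/2}$ to the equation, pairing with $\Lambda^s u^\epsilon$ in $L^2$, and using that $J_\epsilon$ is self-adjoint and commutes with $\Lambda^s$, the key step is to rewrite
\begin{equation*}
(\Lambda^s(uu_x),\Lambda^s u)_{L^2} = ([\Lambda^s,u]u_x,\Lambda^s u)_{L^2} + (u\,\Lambda^s u_x,\Lambda^s u)_{L^2},
\end{equation*}
bound the commutator by the Kato--Ponce estimate
\begin{equation*}
\|[\Lambda^s,u]u_x\|_{L^2}\leq C_s\bigl(\|u_x\|_{L^\infty}\|u\|_{H^s} + \|\Lambda^s u\|_{L^2}\|u_x\|_{L^\infty}\bigr),
\end{equation*}
and absorb the remaining term by one integration by parts, producing $\tfrac12\|u_x\|_{L^\infty}\|\Lambda^s u\|_{L^2}^2$. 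For the nonlocal piece I would use that $\partial_x(1-\partial_x^2)^{-1}$ is a bounded Fourier multiplier of order $-1$, so $\|\Lambda^s\partial_x(1-\partial_x^2)^{-1}u_x^2\|_{L^2}\leq \|u_x^2\|_{H^{s-1}}\lesssim \|u_x\|_{L^\infty}\|u\|_{H^s}$. Combining gives
\begin{equation*}
\tfrac{d}{dt}\|u^\epsilon\|_{H^s}^2 \leq c_s\|u^\epsilon\|_{C^1}\|u^\epsilon\|_{H^s}^2,
\end{equation*}
which, via the Sobolev embedding $H^s(\R)\hookrightarrow C^1(\R)$ for $s>3/2$, closes by Gronwall on a time interval independent of $\epsilon$.

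With uniform bounds in $C([0,T];H^s)$ secured, I would extract a weak-$*$ limit $u$ and pass to the limit in the equation using standard compactness (Aubin--Lions), yielding a solution in $C_w([0,T];H^s)\cap C([0,T];H^{s'})$ for any $s'<s$; the energy estimate then promotes weak continuity to strong continuity via the Bona--Smith argument, giving $u\in C([0,T);H^s)$, and reading off $u_t$ from the equation gives $u\in C^1([0,T);H^{s-1})$. For uniqueness I would take two solutions $u,v$ with the same initial datum, subtract the equations, and run the $L^2$ energy estimate on $w=u-v$: after exploiting the cancellation in $\tfrac{1}{2}\partial_x(u^2-v^2)=\tfrac{1}{2}\partial_x((u+v)w)$ and bounding the nonlocal remainder by $\|w\|_{L^2}(\|u\|_{C^1}+\|v\|_{C^1})$, Gronwall forces $w\equiv 0$. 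Continuity of the data-to-solution map follows again from Bona--Smith: given $u_{0,n}\to u_0$ in $H^s$, mollify both sides, use uniform $H^s$ bounds plus convergence of mollified data in higher norms, and interpolate.

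The main obstacle will be the commutator estimate and the handling of the nonlocal term; the commutator bound is classical but one must be careful that the bound is controlled only by $\|u\|_{C^1}$ (not by higher norms) so that Gronwall gives a genuine lifespan depending only on $\|u_0\|_{H^s}$. A subtle point is that the mollifier must be inserted symmetrically in the quadratic terms so that the symmetrization identity used after integration by parts remains valid at the regularized level; choosing $J_\epsilon$ with an even Fourier symbol resolves this. Once these points are handled, the energy estimate \eqref{estimate} and all remaining claims follow in a standard fashion.
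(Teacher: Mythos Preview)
Your sketch is correct and follows exactly the quasilinear Kato scheme with Kato--Ponce commutator estimates and Bona--Smith approximation that underlies the references \cite{HH,Yan}; the paper itself does not reprove this lemma but simply cites Corollary~2.1 of \cite{Yan} for well-posedness and equation~(2.29) of \cite{HH} for the energy estimate \eqref{estimate}. In other words, you have reconstructed precisely the argument the paper is deferring to.
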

\begin{proof}
    For the proof of existence and uniqueness of solution, see Corollary 2.1 of \cite{Yan}, while the estimate \eqref{estimate} is given by (2.29) of \cite{HH}.
\end{proof}

After having the energy estimate \eqref{estimate} guaranteed, we enunciate a result stated as part of Lemma 5.1 and Theorem 3.1 of da Silva and Freire \cite{Eu1}.

\begin{lemma}\label{lemma5.2'}
    Given $u_0\in H^3(\R)$, let $u$ be the corresponding unique solution of \eqref{nonlocal}--\eqref{1.0.7} with $k=1$.
    \begin{enumerate}
        \item[(a)] If there exists $\kappa>0$ such that $u_x>-\kappa$, then $\Vert u\Vert_{H^3}\leq e^{\kappa t/2}\Vert u_0\Vert_{H^3}$;
        
        \item[(b)] If $m_0$ does not change sign, then $-u_x \leq \Vert m_0\Vert_{L^1}$ for each $(t,x)\in [0.T)\times \R$.
    \end{enumerate}
\end{lemma}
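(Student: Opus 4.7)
The plan is to prove the two parts in reverse order: part (b), the pointwise bound on $u_x$, follows from the transport structure $m_t + u m_x = 0$ (which is \eqref{nonlocal}--\eqref{1.0.7} with $k=1$ written at the level of the momentum), combined with an $L^1$-bound on $m$ and the convolution representation $u = g \ast m$, where $g(x) = \tfrac{1}{2}e^{-|x|}$. Part (a), the exponential control of the $H^3$-norm, I will obtain from two energy identities for the same transport equation together with Gronwall's inequality.

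For (b), I would first invoke the argument of Theorem \ref{teo1.2} in the $H^3$ setting, which is available because $H^3(\R) \hookrightarrow C^1(\R)$ and so Lemma \ref{lemma4.1a} applies verbatim with $u \in C([0,T);H^3) \cap C^1([0,T);H^2)$. The characteristic equation $y_t = u(t,y)$, $y(0,x)=x$ yields $m(t,y(t,x)) = m_0(x)$, so $m(t,\cdot)$ inherits the sign of $m_0$ for all $t$. Integrating $m_t + u m_x = 0$ and using the decay of $u$ gives $\tfrac{d}{dt}\int m\, dx = 0$, so when $m$ has constant sign, $\|m(t)\|_{L^1} = \bigl|\int m(t,x)\,dx\bigr| = \bigl|\int m_0\,dx\bigr| = \|m_0\|_{L^1}$. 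Since $u_x = g' \ast m$ with $\|g'\|_{L^\infty} = 1/2$, I conclude $|u_x(t,x)| \leq \tfrac{1}{2}\|m_0\|_{L^1}$, which gives in particular $-u_x \leq \|m_0\|_{L^1}$.

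For (a), I would work with the equivalent norm $\|u\|_{H^3}^2 \sim \|m\|_{L^2}^2 + \|m_x\|_{L^2}^2$, obtained by expanding $m = u - u_{xx}$ and integrating by parts. Standard energy manipulations on $m_t + u m_x = 0$ then produce
\[
\frac{d}{dt}\|m\|_{L^2}^2 = \int u_x m^2\, dx, \qquad \frac{d}{dt}\|m_x\|_{L^2}^2 = -\int u_x m_x^2\, dx.
\]
Adding these and using $u_x > -\kappa$ should yield a differential inequality of the form $\frac{d}{dt}\|m\|_{H^1}^2 \leq \kappa \|m\|_{H^1}^2$, from which the bound $\|u\|_{H^3} \leq e^{\kappa t/2}\|u_0\|_{H^3}$ follows by Gronwall's inequality together with the norm equivalence.

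The hard part will be closing the estimate for $\|m\|_{L^2}^2$: the sign in $\frac{d}{dt}\|m\|_{L^2}^2 = \int u_x m^2\, dx$ is such that the one-sided lower bound $u_x > -\kappa$ controls $\|m_x\|_{L^2}^2$ directly but leaves $\|m\|_{L^2}^2$ sensitive to the upper part of $u_x$. Following \cite{Eu1}, I would absorb this asymmetry by choosing a combined energy functional in which the cross term $\int u_x(m^2 - m_x^2)\,dx$ is handled using only the one-sided bound on $u_x$, and by exploiting the conservation of $\int u\, dx$ inherent to the $k=1$ equation to close the estimate.
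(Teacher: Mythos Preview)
Your treatment of part~(b) is correct and essentially matches the paper's argument.

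For part~(a), your route via $\|m\|_{H^1}^2$ runs into a genuine obstruction that you correctly flag but do not resolve. Your identities $\frac{d}{dt}\|m\|_{L^2}^2 = \int u_x m^2\,dx$ and $\frac{d}{dt}\|m_x\|_{L^2}^2 = -\int u_x m_x^2\,dx$ are right, but no positive combination $\alpha\|m\|_{L^2}^2 + \beta\|m_x\|_{L^2}^2$ has a time derivative controllable from above by the one-sided hypothesis $u_x > -\kappa$ alone: the factor $\alpha m^2 - \beta m_x^2$ appearing in $\int u_x(\alpha m^2 - \beta m_x^2)\,dx$ is never sign-definite for $\alpha>0$. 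Your proposed fixes remain vague; in particular, conservation of $\int u\,dx$ gives no additional pointwise information on $u_x$ and does not help close the estimate under the sole assumption of part~(a).

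The paper (following \cite{Eu1}) sidesteps this by working at the level of $u$ rather than $m$. One multiplies the nonlocal equation by $u$, its $x$-derivative by $u_x$, and its second $x$-derivative by $u_{xx}$, and then takes the specific weighted combination
\[
I[u] = \int_{\R}\Bigl[\tfrac14(u^2+u_x^2) + \tfrac12(u_x^2+u_{xx}^2) + \tfrac12(u_{xx}^2+u_{xxx}^2)\Bigr]\,dx.
\]
With exactly these coefficients the remaining interior terms cancel to give
\[
\frac{d}{dt}I[u] = \int_{\R}(-u_x)\Bigl(2u_{xx}^2 + \tfrac12 u_{xxx}^2\Bigr)\,dx,
\]
where the integrand is $(-u_x)$ times a nonnegative quantity; the one-sided bound $-u_x<\kappa$ then applies directly and Gr\"onwall finishes. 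The missing ingredient in your outline is this particular choice of energy functional at the $u$-level --- not the conserved quantity $\int u\,dx$.
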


As a consequence, we  can extend the last Lemma to and initial data in $H^s(\R)$ for $s\geq 3$ as the following Corollary states:

\begin{corollary}\label{cor5.1'}
Let $u_0\in H^s(\R), s\geq 3,$ be an initial data with corresponding local solution $u$. If $m_0$ does not change sign, then
$$\Vert u\Vert_{H^3}\leq e^{\kappa t/2}\Vert u_0\Vert_{H^3},\quad \text{for some}\,\, 0<\kappa<\infty.$$
\end{corollary}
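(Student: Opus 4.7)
The plan is to reduce the claim for general $s \geq 3$ to the case $s = 3$ already handled in Lemma \ref{lemma5.2'}. Since $H^s(\R) \hookrightarrow H^3(\R)$ whenever $s \geq 3$, the initial datum $u_0 \in H^s(\R)$ also lies in $H^3(\R)$, with $\Vert u_0 \Vert_{H^3} \leq \Vert u_0 \Vert_{H^s}$. Applying Lemma \ref{lemma5.1'} with regularity index $s$ yields a unique local solution $u \in C([0,T_s); H^s(\R))$, while the same lemma with index $3$ produces a unique local solution $\tilde{u} \in C([0,T_3); H^3(\R))$ emanating from the same datum $u_0$. Since $u$ takes values in $H^s(\R) \subset H^3(\R)$ and also solves the Cauchy problem in $H^3(\R)$, the uniqueness clause of Lemma \ref{lemma5.1'} forces $u = \tilde{u}$ on $[0, \min\{T_s, T_3\})$. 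Hence $u$ may be viewed as an $H^3$-valued solution to which Lemma \ref{lemma5.2'} is directly applicable.

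Next I invoke Lemma \ref{lemma5.2'}(b): since $m_0 \in L^1(\R)$ does not change sign (this integrability being inherited from the standing hypothesis of Theorem \ref{teo.global}), one obtains $-u_x(t,x) \leq \Vert m_0 \Vert_{L^1}$ throughout the interval of existence. Setting $\kappa := \Vert m_0 \Vert_{L^1}$, which is finite and positive except in the trivial case $m_0 \equiv 0$ (where $u \equiv 0$ and the estimate holds with any $\kappa > 0$), the condition $u_x > -\kappa$ is satisfied and Lemma \ref{lemma5.2'}(a) yields
$$\Vert u \Vert_{H^3} \leq e^{\kappa t/2} \Vert u_0 \Vert_{H^3},$$
which is exactly the desired bound.

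The only mildly delicate step is the identification of the $H^s$-valued solution with the $H^3$-valued one, but this is entirely accounted for by the uniqueness clause of Lemma \ref{lemma5.1'}; no new a priori estimate is needed. The crucial feature of the bound is that $\kappa$ depends only on $\Vert m_0 \Vert_{L^1}$ and not on $s$, which is precisely what will later allow one to combine this corollary with the energy estimate \eqref{estimate} and the Sobolev embedding $H^3(\R) \hookrightarrow C^1(\R)$ to close the global-in-time bound on $\Vert u \Vert_{H^s}$ via Grönwall's inequality.
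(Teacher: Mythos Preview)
Your proof is correct and follows essentially the same approach as the paper: use the embedding $H^s(\R)\hookrightarrow H^3(\R)$ to regard $u_0$ as $H^3$ data, invoke Lemma~\ref{lemma5.2'}(b) to obtain $\kappa=\Vert m_0\Vert_{L^1}$ bounding $-u_x$, and then apply Lemma~\ref{lemma5.2'}(a). You add the explicit uniqueness argument identifying the $H^s$- and $H^3$-valued solutions, which the paper leaves implicit; note also that both your argument and the paper's pass silently from $-u_x\leq\kappa$ to the strict hypothesis $u_x>-\kappa$ of Lemma~\ref{lemma5.2'}(a), a harmless slip remedied by taking any $\kappa>\Vert m_0\Vert_{L^1}$.
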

\begin{proof}
    Since $s\geq 3,$ we have $H^s(\R)\subset H^3(\R)$ and $H^{s-1}(\R)\subset H^2(\R)$. Therefore, $u_0\in H^3(\R)$ and, from Lemma \ref{lemma5.2'}(b), there exists $\kappa = \Vert m_0\Vert_{L^1}$ such that $-u_x < \kappa$. The result now follows from Lemma \ref{lemma5.2'}(a).
\end{proof}

We are now ready to prove Theorem \ref{teo.global}.

\textbf{Proof of Proposition \ref{teo.global}:} For $u_0\in H^s(\R), s\geq 3,$ let $u\in C([0,T);H^s(\R))\cap C^1([0,T);H^{s-1}(\R))$ be the unique local solution. From Lemma \ref{lemma5.1'}, the solution is such that \eqref{estimate} holds for $0\leq t< T$. From Gr\"onwall's inequality, we have
\begin{align}\label{eq5.2'}
\Vert u\Vert_{H^s}\leq \Vert u_0\Vert_{H^s} e^{c_s \int_0^t \Vert u(\tau)\Vert_{C^1}d\tau}.
\end{align}
Since $m_0$ does not change sign, from the Sobolev embedding theorem we have
\begin{align*}
    \Vert u\Vert_{C^1} = \Vert u\Vert_{L^{\infty}} + \Vert u_x\Vert_{L^{\infty}}\leq \Vert u\Vert_{H^s}+\Vert u\Vert_{H^{s+1}},
\end{align*}
for $s>1/2$. Taking $s=2$ and using Corollary \ref{cor5.1'}, we obtain
$$\Vert u\Vert_{C^1}\leq  2\Vert u \Vert_{H^3}\leq 2 e^{\kappa t/2}\Vert u_0\Vert_{H^3}.$$
Note that $u_0\in H^s(\R)$ for $s\geq 3$ tells that $u_0\in H^ 3(\R)$ and then, under substitution in \eqref{eq5.2'}, the condition becomes
\begin{align}
\Vert u\Vert_{H^s}&\leq \Vert u_0\Vert_{H^s} e^{c_s \Vert u_0\Vert_{H^3}\int_0^t e^{\kappa \tau/2}d\tau}\\
&= \Vert u_0\Vert_{H^s} e^{c_s \Vert u_0\Vert_{H^3}\left(\frac{e^{K t}-1}{K}\right)}\\
&< \Vert u_0\Vert_{H^s} e^{c_s \Vert u_0\Vert_{H^3}e^{K t}},
\end{align}
where $K = \kappa/2$. This means that $u$ does not blow-up at a finite time and the solution $u$ can be extended globally in time.

\section{Global well-posedness and radius of spatial analyticity}\label{sec5}

In this section we prove Theorem \ref{teo1.4}. In what follows, we will consider the initial value problem
\begin{align}\label{nonlocal1}
\begin{cases}
    u_t = -\frac{1}{2}\partial_x[u^2 + 3(1-\partial_x^2)^{-1}u_x^2]=:F(u), & t\geq 0,\quad x\in \R,\\
    u(0,x)=u_0(x),
\end{cases}
 \end{align}
and make use of local and global well-posedness in Sobolev spaces to extend regularity. The machinery here presented follows closely the ideas of Kato and Masuda \cite{KM} and later Barostichi, Himonas and Petronilho \cite{BHP1}. Since the proof of Theorem \ref{teo1.4} is extremely technical and extensive, we opt to divide the result in several propositions that together will give our desired result. The propositions that will be presented next will be proven in the next subsections. We start with a very important result regarding global well-posedness in $H^{\infty}(\R)$.

\begin{proposition}\label{prop5.1}
    If $u_0\in G^{1,s}(\R)$, $s>3/2$, and $m_0$ does not change sign, then \eqref{nonlocal1} has a unique solution $u\in C([0,\infty);H^{\infty}(\R))$.
\end{proposition}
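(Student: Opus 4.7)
\textbf{Proof plan for Proposition \ref{prop5.1}.} The strategy is to combine the embedding $G^{1,s}(\R)\hookrightarrow H^{\infty}(\R)$ from \eqref{2.0.2} with the global Sobolev result in Theorem \ref{teo.global}, and then use uniqueness to glue the solutions at every Sobolev level into a single function valued in $H^{\infty}(\R)$.

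First, since $u_0\in G^{1,s}(\R)$, the embedding \eqref{2.0.2} yields $u_0\in H^{N}(\R)$ for every $N\geq 0$. In particular $u_0\in H^{N}(\R)$ for every integer $N\geq 3$, so the hypothesis on the Sobolev regularity in Theorem \ref{teo.global} is satisfied for all such $N$. To apply that theorem we also need $m_0\in L^1(\R)$: this follows from the Paley-Wiener characterization in Proposition \ref{PW}, which exhibits $u_0$ as the restriction of a function holomorphic on the strip of width $2$ with uniform $H^s$-bounds on horizontal lines; shifting the contour of the Fourier inversion by $\pm i\sigma$ for any $\sigma<1$ shows that $u_0$ and $\partial_x^2u_0$ decay faster than any exponential, and hence $m_0=u_0-\partial_x^2u_0\in L^1(\R)$.

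Second, since by hypothesis $m_0$ does not change sign and $u_0\in H^{N}(\R)$ with $N\geq 3$, Theorem \ref{teo.global} applies and provides, for each such $N$, a global solution $u^{(N)}\in C([0,\infty);H^{N}(\R))\cap C^1([0,\infty);H^{N-1}(\R))$ of \eqref{nonlocal1}. For $N'>N\geq 3$ we have $u^{(N')}\in C([0,\infty);H^{N}(\R))$ as well, because $H^{N'}(\R)\hookrightarrow H^{N}(\R)$. The uniqueness statement in Lemma \ref{lemma5.1'}, combined with a standard continuation argument on $[0,\infty)$, forces $u^{(N)}=u^{(N')}$ on their common time domain. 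Consequently there is a single function $u$, independent of $N$, satisfying $u\in C([0,\infty);H^{N}(\R))$ for every integer $N\geq 3$.

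Finally, for arbitrary $s\geq 0$ pick any integer $N\geq \max\{s,3\}$; then $H^{N}(\R)\hookrightarrow H^{s}(\R)$ gives $u\in C([0,\infty);H^{s}(\R))$, so $u\in C([0,\infty);H^{\infty}(\R))$. Uniqueness in $H^{\infty}(\R)$ is inherited from the uniqueness in any single $H^{N}(\R)$ with $N\geq 3$, because two $H^{\infty}$-solutions are in particular $H^{3}$-solutions. The main technical point is not any of the steps above in isolation but rather the verification that the hypotheses of Theorem \ref{teo.global} are really available from Gevrey data, in particular the integrability $m_0\in L^{1}(\R)$; once this is in place, the rest is a clean bootstrap across Sobolev indices.
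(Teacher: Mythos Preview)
Your overall strategy---embed $G^{1,s}(\R)$ into $H^\infty(\R)$ via \eqref{2.0.2}, apply Theorem~\ref{teo.global} at every integer level $N\geq 3$, and use uniqueness from Lemma~\ref{lemma5.1'} to identify all the resulting solutions---is exactly what the paper does, and that part of your argument is correct.

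The genuine problem is your verification that $m_0\in L^1(\R)$. Membership $u_0\in G^{1,s}(\R)$ means $e^{|\xi|}(1+|\xi|^2)^{s/2}\hat u_0\in L^2_\xi$, i.e.\ the \emph{Fourier transform} of $u_0$ decays exponentially; by Proposition~\ref{PW} this is equivalent to holomorphic extension of $u_0$ to a strip, but it imposes no spatial decay on $u_0(x)$ beyond the $H^s$ control already present. Your contour-shifting argument runs in the wrong direction: to shift the contour in the Fourier \emph{inversion} integral you would need analytic continuation of $\hat u_0$, which is equivalent to exponential decay of $u_0$---exactly the conclusion you want, not the hypothesis you have. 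In fact one can write down $u_0\in G^{1,s}(\R)$ with $m_0\geq 0$ and $m_0\notin L^1(\R)$: take $m_0(x)=(1+x^2/4)^{-1/3}$, whose Fourier transform involves a Bessel $K$ function and decays like $e^{-2|\xi|}$, so that $u_0=(1-\partial_x^2)^{-1}m_0\in G^{1,s}(\R)$ for every $s$, while $m_0(x)\sim c|x|^{-2/3}$ at infinity and is not integrable.

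You were right to flag that Theorem~\ref{teo.global} carries the hypothesis $m_0\in L^1(\R)$; the paper's own proof of Proposition~\ref{prop5.1} simply invokes Theorem~\ref{teo.global} without checking it. So your instinct was good, but the Paley--Wiener argument you give does not establish the missing integrability, and in general it cannot be established from Gevrey data alone.
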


Once we have the global solution established, we will extend regularity to the Kato-Masuda space. We are able to find $r_1>0$ such that for each fixed arbitrary time $T>0$ the solution will belong to $A(r_1(t))$ as a space function for $t\in[0,T]$. From the definition of the spaces $A(r)$, this $r_1$ will be the radius of spatial analyticity of the solution.

\begin{proposition}\label{teo5.1}
    Given $u_0\in G^{1,s}(\R)$, with $s>5/2$, suppose that $m_0$ does not change sign and let $u\in C([0,\infty);H^{\infty}(\R))$ be the unique solution to the initial value problem of \eqref{nonlocal1}. Then there exists $r_1>0$ such that $u\in C([0,\infty);A(r_1))$. Moreover, for every $T>0$ an explicit lower bound for the radius of spatial analyticity is given by
    $$r_1(t)\geq L_3e^{-L_1e^{L_2t}},\quad t\in[0,T],$$
    where $L_1=\frac{52\sqrt{2}}{7}\km{u_0}{\sigma_0}{2}$ for $\sigma_0<0$ fixed, $L_2 = 112\mu, L_3 = r(0)e^{L_1}$ and $\mu = 1+\max\{\sobolev{u}{2};t\in[0,T]\}$. 
\end{proposition}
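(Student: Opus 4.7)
\textbf{Proof proposal for Proposition \ref{teo5.1}.} The plan is to invoke the Kato--Masuda machinery \cite{KM} for the scale of analytic spaces $\{A(r)\}_{r>0}$ with the norm in \eqref{2.0.1a}, coupled with the global $H^{\infty}$ solution supplied by Proposition \ref{prop5.1}. First I would pick $\sigma_0<0$ so that $e^{\sigma_0}<1$, which guarantees via the embedding $G^{1,s}(\R)\hookrightarrow A(\sigma)$ in \eqref{2.0.2} that $u_0\in A(e^{\sigma_0})$ and $\km{u_0}{\sigma_0}{2}$ is finite. The radius of spatial analyticity at time $t$ is then identified with $r_1(t)=e^{\sigma(t)}$ for a decreasing function $\sigma:[0,T]\to\R$ whose existence we intend to prove.

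The key step is to follow the Kato--Masuda scheme by introducing the function
\begin{equation*}
\Phi(t)=\km{u(t)}{\sigma(t)}{2}^2=\sum_{j=0}^{\infty}\frac{1}{j!^2}e^{2\sigma(t)j}\sobolev{\partial_x^j u(t)}{2}^2,
\end{equation*}
differentiating along the flow $u_t=F(u)$, and comparing the result with a quantity controlled solely by $\sobolev{u}{2}$. Differentiation produces two contributions: a \emph{good} negative term proportional to $\sigma'(t)$ coming from $\frac{d}{dt}e^{2\sigma(t)j}$ (which involves an extra factor of $j$, equivalent to gaining one derivative) and a \emph{bad} nonlinear term $2\langle F(u),u\rangle_{\sigma(t),2}$. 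The hard part will be proving an estimate of the Kato--Masuda type
\begin{equation*}
|\langle F(u),u\rangle_{\sigma,2}|\le C\,\sobolev{u}{2}\,\km{u}{\sigma}{2}\,\km{u_x}{\sigma}{2},
\end{equation*}
together with the companion estimate for the nonlocal term $(1-\p_x^2)^{-1}u_x^2$, where one uses the Helmholtz inversion (which costs no derivatives in the analytic norm, analogous to Lemma \ref{lemma2.2}) to compensate for the $u_x^2$ contribution. This is precisely where the explicit numerical constant $\tfrac{52\sqrt 2}{7}$ will emerge from a careful bookkeeping of the combinatorics in $\partial_x^j(u^2)$ and $\partial_x^j u_x^2$, similar to but distinct from the calculation performed in Lemma \ref{lemma2.1}.

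Once that estimate is in place, the resulting differential inequality can be written as
\begin{equation*}
\frac{d}{dt}\Phi(t)\le 2\sigma'(t)\,\km{u_x(t)}{\sigma(t)}{2}^2+C_1\,\mu\,\km{u_x(t)}{\sigma(t)}{2}\,\Phi(t)^{1/2},
\end{equation*}
where $\mu=1+\max_{[0,T]}\sobolev{u}{2}$ is finite by Proposition \ref{prop5.1} together with the global bound produced by Theorem \ref{teo.global} (applied in $H^2\hookrightarrow H^s$ for the relevant $s$). Choosing $\sigma(t)$ so that $-2\sigma'(t)=C_1\mu\,\Phi(t)^{1/2}/(2\km{u_x(t)}{\sigma(t)}{2})$ absorbs the bad term into the good one, and applying Young's inequality, one obtains after bookkeeping a closed ODE of the shape
\begin{equation*}
\Phi'(t)\le C_2\mu\,\Phi(t),\qquad \sigma'(t)\ge -C_3\Phi(t)^{1/2}.
\end{equation*}
The first line gives $\Phi(t)\le \Phi(0)e^{C_2\mu t}$ by Grönwall, and substituting into the second yields $\sigma(t)\ge \sigma(0)-C_3\Phi(0)^{1/2}(e^{C_2\mu t/2}-1)/(C_2\mu/2)$, a single exponential in time inside an exponential in $\sigma$ upon translating back to $r_1=e^{\sigma}$. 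Tracking the constants $C_1,C_2,C_3$ from the algebra estimate in the previous paragraph furnishes exactly $L_1=\frac{52\sqrt 2}{7}\km{u_0}{\sigma_0}{2}$, $L_2=112\mu$, and $L_3=r(0)e^{L_1}$, producing the claimed double-exponential lower bound $r_1(t)\ge L_3 e^{-L_1 e^{L_2 t}}$. Finally, continuity $u\in C([0,\infty);A(r_1))$ follows because $\Phi(t)$ is finite on every compact interval and $T>0$ was arbitrary.
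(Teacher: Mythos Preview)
Your strategy is the same as the paper's---the Kato--Masuda scheme applied to the family $\Phi_\sigma(u)=\tfrac12\km{u}{\sigma}{2}^2$, with $\sigma(t)$ chosen to absorb the ``loss of one derivative'' coming from the nonlinearity---and you correctly land on the coupled system $\Phi'\le C_2\mu\,\Phi$, $\sigma'\ge -C_3\Phi^{1/2}$, which after Gr\"onwall and integration yields the double exponential bound. Two points, however, are not quite right as written.

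First, the key nonlinear estimate is not of the single-term form $|\langle F(u),u\rangle_{\sigma,2}|\le C\sobolev{u}{2}\km{u}{\sigma}{2}\km{u_x}{\sigma}{2}$. What actually holds (and what the paper establishes as its Proposition~\ref{prop5.5}, citing \cite{BHP1} for the combinatorics) is a two-term bound
\[
|D\Phi_{\sigma}(u)F(u)|\le \bar K(\sobolev{u}{2})\,\Phi_{\sigma}(u)+\bar\alpha(\sobolev{u}{2},\Phi_{\sigma}(u))\,\partial_\sigma\Phi_{\sigma}(u),
\]
with $\bar K(p)=224p$ and $\bar\alpha(p,q)=832(1+p)q^{1/2}$. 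The quantity that pairs with $\sigma'(t)$ upon differentiating $\Phi$ is $\partial_\sigma\Phi_\sigma(u)=\sum_j \tfrac{2j}{j!^2}e^{2\sigma j}\sobolev{\partial_x^j u}{2}^2$, not $\km{u_x}{\sigma}{2}^2$; these are related but not interchangeable, and it is the $\partial_\sigma\Phi$ term that one kills by the choice $\sigma'(t)=-\bar\alpha(\mu,\rho(t))$, leaving the clean linear ODE $\rho'=\bar K(\mu)\rho$ for a majorant $\rho\ge\Phi$. The constants $L_1,L_2$ in the statement come precisely from $\bar K,\bar\alpha$ via $\int_0^t\bar\alpha(\mu,\rho(\tau))\,d\tau$.

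Second, and more seriously, you differentiate the infinite series $\Phi(t)=\km{u(t)}{\sigma(t)}{2}^2$ directly, but a priori you only know $u(t)\in H^\infty(\R)$, so this sum may be $+\infty$ and the computation is formal. The paper (following \cite{KM}) handles this by working with the truncated functionals $\Phi_{\sigma,m}(u)=\tfrac12\kma{u}{\sigma}^2$ on the open set $O\subset H^{m+5}(\R)$, applying the abstract Kato--Masuda theorem to obtain $\Phi_{\sigma_m(t),m}(u(t))\le\rho_m(t)$ uniformly in $m$, and only then letting $m\to\infty$ to conclude $\km{u(t)}{\sigma(t)}{2}<\infty$. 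Without this truncation-and-limit step your argument does not actually prove that $u(t)\in A(r_1(t))$.
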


Proposition \ref{teo5.1} says that the global solution is analytic in $x$ and gives a lower bound for the radius of spatial analyticity. 
The next step is to extend regularity to $t$. Our first step towards this goal is to prove that the solution $u$ is locally analytic in time, as enunciated by the next result.

\begin{proposition}\label{prop5.3}
    Given $u_0\in G^{1,s}(\R)$, with $s>5/2$, let $u\in C([0,\infty); A(r_1))$ be the unique solution of \eqref{nonlocal1}. Then there exist $T>0$ and $\delta(T)>0$ such that the unique solution $u$ belongs to $C^{\omega}([0,T];A(\delta(T)))$.
\end{proposition}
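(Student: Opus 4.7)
The plan is to upgrade the given continuous-in-time global solution to a locally analytic-in-time one by invoking the local Gevrey-analytic well-posedness already supplied by Corollary \ref{cor3.1}, composing with the embedding $G^{\sigma,s}(\R)\hookrightarrow A(\sigma)$ recorded in \eqref{2.0.2}, and identifying the two solutions through Sobolev uniqueness.

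First I would apply Corollary \ref{cor3.1} to the initial datum $u_0\in G^{1,s}(\R)$: this produces a single lifespan $T_0>0$ such that for every $\sigma\in(0,1)$ there is a unique solution $\tilde u\in C^{\omega}([0,T_0(1-\sigma));G^{\sigma,s}(\R))$ of \eqref{nonlocal1}. The chain $G^{\sigma,s}(\R)\hookrightarrow A(\sigma)\hookrightarrow H^{\infty}(\R)\hookrightarrow H^s(\R)$ places $\tilde u$ in $C([0,T_0(1-\sigma));H^s(\R))\cap C^1([0,T_0(1-\sigma));H^{s-1}(\R))$, while the hypothesis $u\in C([0,\infty);A(r_1))$ together with the equation $u_t=F(u)$ does the same for $u$. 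By the uniqueness clause of Lemma \ref{lemma5.1'}, the two solutions must coincide on their common domain $[0,T_0(1-\sigma))$.

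Next I would fix any $\sigma\in(0,1)$, for definiteness $\sigma=1/2$. Since the embedding $G^{1/2,s}(\R)\hookrightarrow A(1/2)$ from \eqref{2.0.2} is continuous and $\mathbb{C}$-linear, and since composing an $X$-valued analytic function with a continuous linear map $X\to Y$ yields a $Y$-valued analytic function, we get
\begin{equation*}
u \;\in\; C^{\omega}\bigl([0,T_0/2);\,A(1/2)\bigr).
\end{equation*}
To match the closed-interval formulation of the proposition, take $T:=T_0/4$ and $\delta(T):=1/2$. The compact interval $[0,T]$ lies strictly inside the open interval $[0,T_0/2)$ on which $u$ is real-analytic with values in $A(1/2)$, so it is contained in an open complex neighborhood to which $u$ extends holomorphically with values in $A(1/2)$, giving the desired $u\in C^{\omega}([0,T];A(\delta(T)))$.

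The only genuine point of care is the interpretation of $C^{\omega}$ at the right endpoint, which is dealt with by shrinking the closed interval strictly below the lifespan of the local analytic solution; no sharp quantitative control of $T$ or $\delta(T)$ is required since the statement only asks for existence of positive constants. An equivalent route would start from Theorem \ref{teo1} in the $E_{\sigma,m}(\R)$ scale and use the embedding $E_{\sigma,m}(\R)\hookrightarrow A(\delta)$ for a suitable $\delta$; both paths rest on the same principle of composing the analytic-in-time local solution with a continuous linear embedding into the Kato--Masuda scale.
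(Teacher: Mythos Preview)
Your proof is correct and follows essentially the same approach as the paper: invoke Corollary \ref{cor3.1} to obtain a locally analytic-in-time Gevrey solution, push it into $A(\sigma)$ via the embedding \eqref{2.0.2}, and identify it with $u$ through Sobolev uniqueness. The only cosmetic difference is that the paper writes $T=\tfrac{\tilde T}{2}(1-\delta)$ to record $\delta(T)=1-2T/\tilde T$ explicitly, whereas you simply fix $\sigma=1/2$ and $T=T_0/4$; both choices serve the same purpose of landing on a closed subinterval strictly inside the analytic lifespan.
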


Once local analyticity is established, we show that the analytic lifespan is infinite.

\begin{proposition}\label{prop5.4}
    For the unique solution $u \in C^{\omega}([0,T];A(\delta(T)))$, we have
    $$T^{\ast} = \sup\{T>0, u\in C^{\omega}([0,T]; A(\delta(T))),\,\,\text{for some}\,\, \delta(T)>0\}=\infty.$$
\end{proposition}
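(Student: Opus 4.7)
The plan is a continuation-by-contradiction argument: assume $T^{\ast}<\infty$ and exhibit a time-analytic extension of $u$ strictly past $T^{\ast}$, thereby contradicting maximality. The extension will be produced by restarting the Cauchy problem from an initial time $t_{0}$ just below $T^{\ast}$ with initial datum $u(t_{0})$, which is available because Proposition \ref{teo5.1} already controls the spatial analytic regularity of $u$ on every compact time interval.

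Fix $T_{1}=T^{\ast}+1$. Proposition \ref{teo5.1} yields the uniform lower bound
$$r_{1}(t)\geq \rho_{0}:=L_{3}\exp\bigl(-L_{1}e^{L_{2}T_{1}}\bigr)>0,\qquad t\in[0,T_{1}],$$
so $u(t)\in A(\rho_{0})$ for every such $t$; together with the fact from Proposition \ref{prop5.1} that all Sobolev norms of $u$ are continuous in $t$, and therefore bounded on $[0,T_{1}]$, this provides a constant $\Lambda>0$ and parameters $\sigma$, $s$ (with $e^{\sigma}<\rho_{0}$ and $s>5/2$) such that $\km{u(t_{0})}{\sigma}{s}\leq \Lambda$ uniformly for $t_{0}\in[0,T_{1}]$. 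Because \eqref{nonlocal1} is autonomous, I would invoke Proposition \ref{prop5.3} with $u(t_{0})$ in place of $u_{0}$, producing $\tau=\tau(u(t_{0}))>0$ and $\delta=\delta(u(t_{0}))>0$ with $u\in C^{\omega}([t_{0},t_{0}+\tau];A(\delta))$. A close reading of the construction underlying Proposition \ref{prop5.3} shows that $\tau$ and $\delta$ depend on the datum only through the norm $\km{u(t_{0})}{\sigma}{s}$, so the uniform bound $\Lambda$ yields uniform positive lower bounds $\tau_{\ast}=\tau_{\ast}(\Lambda)$ and $\delta_{\ast}=\delta_{\ast}(\Lambda)$ independent of $t_{0}\in[0,T_{1}]$.

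To close the argument, I would choose $t_{0}\in(T^{\ast}-\tau_{\ast}/2,T^{\ast})$, so that $t_{0}+\tau_{\ast}>T^{\ast}$. By the very definition of $T^{\ast}$ there exists $\delta_{1}>0$ with $u\in C^{\omega}([0,t_{0}];A(\delta_{1}))$, and by the previous step $u\in C^{\omega}([t_{0},t_{0}+\tau_{\ast}];A(\delta_{\ast}))$. Setting $\delta_{2}=\min(\delta_{1},\delta_{\ast})>0$, the uniqueness granted by Proposition \ref{prop5.1} forces the two time-analytic branches to coincide with $u$ on any overlap, and the identity principle for real-analytic functions then patches them into a single analytic curve $u\in C^{\omega}([0,t_{0}+\tau_{\ast}];A(\delta_{2}))$. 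Since $t_{0}+\tau_{\ast}>T^{\ast}$, this contradicts the definition of $T^{\ast}$ as a supremum, forcing $T^{\ast}=\infty$.

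The principal obstacle is the uniformity claim of the second paragraph, namely that the analyticity time and radius furnished by Proposition \ref{prop5.3} depend on the initial datum only through a single bounded functional (here $\km{u(t_{0})}{\sigma}{s}$), and thus remain bounded away from zero as $t_{0}\to T^{\ast}$. This requires tracking the constants appearing in Proposition \ref{prop5.3} through the restart and verifying that they stay controlled by $\Lambda$. A secondary technicality is the gluing in the third paragraph: one must check that the two analytic branches indeed take values in a common scale of spaces at the matching time $t_{0}$ and agree there, so that the identity principle legitimately produces a single $C^{\omega}$ map into $A(\delta_{2})$.
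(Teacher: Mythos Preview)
Your continuation-by-contradiction strategy is exactly the right idea and matches the paper's, but there is a genuine gap in the tool you invoke for the restart.

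You appeal to Proposition~\ref{prop5.3} with $u(t_0)$ in place of $u_0$. But Proposition~\ref{prop5.3} (and the Corollary~\ref{cor3.1} on which it rests) has the hypothesis $u_0\in G^{1,s}(\R)$, i.e.\ analyticity in a strip of fixed width~$1$. All you know about the restart datum is $u(t_0)\in A(\rho_0)$ with $\rho_0$ possibly much smaller than~$1$, and you control only the Kato--Masuda norm $\km{u(t_0)}{\sigma}{s}$, not a Gevrey norm at radius~$1$. The claim that ``a close reading of the construction'' gives lifespan/radius depending only on $\km{u(t_0)}{\sigma}{s}$ is precisely the nontrivial content: the construction underlying Corollary~\ref{cor3.1} is an Ovsyannikov argument on the scale $\{G^{\delta,s}\}_{0<\delta<1}$ with top index~$1$, and the lifespan formula it produces is governed by $\Vert u_0\Vert_{G^{1,s}}$, which you do not control. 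Making this work for data with small analyticity radius requires exactly the general-$\sigma_0$ Ovsyannikov theorem (Theorem~\ref{teo1.5}) together with an embedding of $A(r)$ into a scale indexed by $\sigma_0\le r$---which is the machinery the paper builds for this very purpose.

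The paper's proof circumvents both your obstacles at once. It restarts \emph{at} $T^\ast$ rather than before: since the global $H^\infty$ solution from Proposition~\ref{prop5.1} exists past $T^\ast$ and Proposition~\ref{teo5.1} gives $u(T^\ast)\in A(r_1)$, one can take $u(T^\ast)$ itself as new initial datum. Lemma~\ref{lemma5.4} then supplies the embedding $A(r_1)\subset E_{\delta_0,m}(\R)$ for $\delta_0<\min\{1,r_1/e\}$, and Theorem~\ref{teo1} (local well-posedness in $E_{\delta_0,m}$ for arbitrary $\delta_0\in(0,1]$) yields an analytic solution on a time interval around $T^\ast$ with values in $E_{\delta,m}\subset A(\delta)$. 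Gluing with some $u\in C^\omega([0,T];A(\delta(T)))$, $T\in(T^\ast-\epsilon,T^\ast)$, at the common radius $\tilde\delta=\min\{\delta,\delta(T)\}$ gives the contradiction. No uniformity in $t_0$ is needed, and the embedding issue is handled cleanly by Lemma~\ref{lemma5.4} and the Himonas--Misio{\l}ek scale rather than the Gevrey scale.
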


Finally, to conclude our result, we use a result proved by Barostichi, Himonas and Petronilho in \cite{BHP1} (see page 752).

\begin{lemma}\label{lemma5.1}
    If $u\in C^{\omega}([0,T]; A(r(T)))$ for all $T>0$ and some $r(T)>0$, then $u \in C^{\omega}([0,\infty)\times \R)$.
\end{lemma}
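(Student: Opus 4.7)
The plan is to localize. Fix an arbitrary $(t_0,x_0)\in [0,\infty)\times \R$, pick any $T > t_0$, and build a bivariate Taylor series in $(t-t_0,x-x_0)$ that converges absolutely to $u(t,x)$ on some open bidisk about $(t_0,x_0)$. Because $T$ and the base point are free, this yields $u\in C^{\omega}([0,\infty)\times \R)$. By hypothesis $u\in C^{\omega}([0,T]; A(r(T)))$, so $t\mapsto u(t)$ extends to a holomorphic function $\tilde u : D(t_0,\delta)\to A(r(T))$ for some $\delta>0$; the vector-valued Cauchy inequality applied on a slightly smaller disk then produces $\Vert \partial_t^k u(t_0)\Vert_{\sigma,s} \le C\, k!\, \delta^{-k}$ for any admissible pair $(\sigma,s)$ with $e^{\sigma} < r(T)$ and $s > 1/2$, with $C$ independent of $k$.

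Set $u_k := \partial_t^k u(t_0)$ and $\rho := e^{\sigma} < r(T)$. The Kato--Masuda norm directly gives
$$\Vert \partial_x^j u_k\Vert_{H^s} \le j!\, \rho^{-j}\, \Vert u_k\Vert_{\sigma,s},$$
and the Sobolev embedding $H^s(\R)\hookrightarrow L^{\infty}(\R)$ for $s > 1/2$ converts this into the pointwise estimate
$$|\partial_t^k \partial_x^j u(t_0,x_0)| = |\partial_x^j u_k(x_0)| \le \tilde C\, \frac{k!}{\delta^k}\cdot \frac{j!}{\rho^j}.$$
Consequently the formal bivariate Taylor series
$$S(t,x) := \sum_{k,j\ge 0} \frac{(t-t_0)^k (x-x_0)^j}{k!\, j!}\, \partial_t^k \partial_x^j u(t_0,x_0)$$
converges absolutely on the bidisk $|t-t_0| < \delta/2,\ |x-x_0| < \rho/2$ and defines a jointly real-analytic function there.

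It remains to identify $S(t,x)$ with $u(t,x)$ on the real trace of this bidisk. For real $t$ in the disk the series $\sum_k (t-t_0)^k u_k/k!$ converges to $u(t,\cdot)$ in $A(r(T))$ and hence pointwise in $x$ via the chain $A(r(T)) \hookrightarrow H^{\infty}(\R) \hookrightarrow L^{\infty}(\R)$ already recorded in Section \ref{sec2}. Each $u_k$ is, by the very definition of $A(r(T))$, the restriction to $\R$ of a function holomorphic on the strip of width $r(T)$, so it agrees on $|x-x_0| < \rho$ with its $x$-Taylor series about $x_0$. Absolute convergence of the double series then permits interchanging the two summations (Fubini), giving $u(t,x) = S(t,x)$. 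The main technical obstacle is the quantitative bookkeeping: one must fix a single admissible pair $(\sigma,s)$ at the outset and track how the constants in the Cauchy and Sobolev estimates combine, making sure that neither the time radius $\delta$ nor the spatial radius $\rho$ collapses as $(t_0,x_0)$ moves within $[0,T]\times \R$. Once both Cauchy-type estimates are controlled uniformly on compact subsets, joint analyticity follows locally, and the arbitrariness of $T$ and of $(t_0,x_0)$ closes the argument.
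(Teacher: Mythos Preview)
The paper does not supply its own proof of this lemma; it simply defers to \cite{BHP1}, page 752. Your argument is correct and is exactly the expected mechanism: vector-valued Cauchy estimates in $t$ together with the factorial-in-$j$ control built into the Kato--Masuda norm $\Vert\cdot\Vert_{\sigma,s}$ yield the joint bound $|\partial_t^k\partial_x^j u(t_0,x_0)|\le C\,k!\,j!\,\delta^{-k}\rho^{-j}$, and absolute convergence plus identification of the bivariate Taylor series then give joint real-analyticity. This is almost certainly the argument that appears in the cited reference as well.
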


\textbf{Proof of Theorem \ref{teo1.4}.} The proof is now reduced to a recollection of the previous propositions. Given $u_0\in G^{1,s}(\R)$, if $m_0$ does not change sign, from Proposition \ref{prop5.1} we have a unique solution $u\in C([0,\infty), H^{\infty}(\R))$. From Proposition \ref{teo5.1}, we guarantee the existence of $r_1>0$ such that $u \in C([0,\infty),A(r_1))$, which by Proposition \ref{prop5.3} belongs to $C^{\omega}([0,T],A(\delta(T)))$ for certain $T>0$ and $\delta(T)>0$. Proposition \ref{prop5.4} then guarantees that $u\in C^{\omega}([0,T],A(\delta(T)))$ for every $T>0$ and then Lemma \ref{lemma5.1} concludes that the solution $u$ is global analytic for both variables.

Moreover, we observe from Proposition \ref{teo5.1} that given $T>0$, we have $u(t)\in A(r_1)$ for $t\in [0,T]$ and $r_1(t) \geq L_3e^{-L_1e^{L_2t}}$. By means of the forthcoming expression \eqref{5.2.2} obtained in the proof of Proposition \ref{prop5.1} we can determine the radius of spatial analyticity as
$$r_1(t) = Ce^{-Ae^{Bt}},$$
where
$$A=\frac{26\sqrt{2}}{7\mu}(1+\mu)\km{u_0}{\sigma_0}{2},\quad B=112\mu,\quad C= e^{\sigma_0+A}$$
and $\mu, \sigma_0$ are given as in Proposition \ref{teo5.1}.

\subsection{Proof of Proposition \ref{prop5.1}}

For the proof of Proposition \ref{prop5.1}, the only ingredients required are Proposition \ref{teo.global}, the embeddings $G^{1,s}(\R)\subset H^{\infty}(\R)$ and $H^s(\R)\subset H^{s'}(\R)$ for $s>s'$, as shown next.

Since $u_0\in G^{1,s}(\R)$, from the embedding $G^{1,s}(\R)\subset H^{\infty}(\R)$ the initial data belongs, in particular, to $H^s(\R)$ for any $s\geq 3$. From Theorem \ref{teo.global}, there exists a unique global solution $u$ in $C([0,\infty);H^{s}(\R))\cap C^1([0,\infty);H^{s-1}(\R))$ for $s\geq 3$, which means that $u(t,\cdot) \in \bigcap\limits_{s\geq 3}H^s(\R)$. Now, the embedding $H^3(\R)\subset H^{s'}(\R)$ for $s'\in [0,3]$ shows that $u(t,\cdot)\in H^{s'}(\R)$ and $u(t,\cdot)\in H^{\infty}(\R)$, concluding the proof of Proposition \ref{prop5.1}

\subsection{Proof of Proposition \ref{teo5.1}}

This is by far the most technical and complicated result. 
The proof consists of bounding a certain inner product and using properties of dense spaces to find such $r_1$. For $m\geq 0$, it will be more convenient to consider an auxiliary norm 
$$\kma{u}{\sigma}^2 = \sum\limits_{j=0}^m\frac{1}{(j!)^{2}}e^{2\sigma j} \sobolev{\partial_x^ju}{2}^2$$
in $A(r)$ and recover the usual norm \eqref{2.0.1a} as we make $m\to \infty$. Moreover, we observe that $\kma{u}{\sigma}\leq \km{u}{\sigma}{2}$.

For our initial value problem \eqref{nonlocal1}, we note that, given $m\geq 0$, the function $F:H^{m+5}(\R)\rightarrow H^{m+2}(\R)$ is well-defined and continuous. Therefore, for $Z=H^{m+5}(\R)$ and $X= H^{m+2}(\R)$ the following result, which will be called Kato-Masuda Theorem, is valid, see Theorem 1 in \cite{KM} or Theorem 4.1 in \cite{BHP1} for more general formulations.

\begin{lemma}[\textsc{Kato-Masuda}]\label{prop4.2}
    Let $\{\Phi_{\sigma}:-\infty<\sigma<\bar{\sigma}\}$ be a family of real functions defined on an open set $O\subset Z$ for some $\bar{\sigma}\in\R$. Suppose that $F: O \rightarrow X$ is continuous, where $F$ is the function given by \eqref{nonlocal1} and
    \begin{enumerate}
        \item[(a)] $D\Phi_{\cdot}(\cdot):\R\times Z\rightarrow \mathcal{L}(\R\times X; \R)$ given by
        $$D\Phi_{\sigma}(v)F(v):=\langle F(v)\,,\, D \Phi_{\sigma}(v)\rangle$$
        is continuous, where $D$ denotes the Fréchet derivative;
        \item[(b)] there exists $\bar{r}>0$ such that $$D\Phi_{\sigma}(v)F(v) \leq \beta(\Phi_{\sigma}(v)) + \alpha(\Phi_{\sigma}(v))\partial_{\sigma}\Phi_{\sigma}(v),$$
        for all $v\in O$ and some nonnegative continuous real functions $\alpha(r)$ and $\beta(r)$ well-defined for $-\infty<r<\bar{r}$.
    \end{enumerate}
    For $T>0$, let $u\in C([0,T];O)\cap C^1([0,T];X)$ be a solution of the initial value problem \eqref{nonlocal1} such that there exists $b<\bar{\sigma}$ with $\Phi_{b}(u_0)<\bar{r}$. Finally, let $\rho(v)$ be the unique solution of
    \begin{align*}
        \begin{cases}
        \displaystyle{\frac{d\rho(t)}{dt} = \beta(\rho)},\\
        \rho(0) = \Phi_b(u_0),& t\geq 0.
        \end{cases}
    \end{align*}
    Then for
    $$\sigma(t) = b - \int_{0}^t \alpha(\rho(\tau))d\tau,\quad t\in[0,T_1],$$
    where $T_1>0$ is the lifespan of $\rho$, we have
    \begin{align}\label{5.0.2}
        \Phi_{\sigma(t)}(u) \leq \rho(t),\quad t\in[0,T]\cap[0,T_1].
    \end{align}
\end{lemma}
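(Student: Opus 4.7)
The plan is to introduce the scalar function $\Psi(t):=\Phi_{\sigma(t)}(u(t))$ on $[0,T]\cap[0,T_1]$ and show that it satisfies a differential inequality that is dominated by the scalar ODE obeyed by $\rho$. Since $\Psi(0)=\Phi_{b}(u_0)=\rho(0)$, the conclusion $\Psi(t)\leq \rho(t)$ will then follow from a standard scalar comparison argument. First I would check that $\Psi$ is well-defined and $C^1$: because $\alpha\geq 0$, $\sigma(\cdot)$ is nonincreasing and stays below $\bar{\sigma}$ starting from $\sigma(0)=b<\bar{\sigma}$, so the family $\{\Phi_{\sigma(t)}\}$ is admissible; condition (a) together with $u\in C^1([0,T];X)$ and smoothness of $\sigma(t)$ justifies applying the chain rule to get
$$\Psi'(t)=-\alpha(\rho(t))\,(\partial_{\sigma}\Phi_{\sigma(t)})(u(t))+D\Phi_{\sigma(t)}(u(t))\,F(u(t)).$$

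Next I would feed this into hypothesis (b) with $v=u(t)$ and $\sigma=\sigma(t)$ to obtain
$$\Psi'(t)\leq \beta(\Psi(t))+\bigl[\alpha(\Psi(t))-\alpha(\rho(t))\bigr]\,(\partial_{\sigma}\Phi_{\sigma(t)})(u(t)).$$
The structural observation that drives the whole proof is that whenever $\Psi(t)=\rho(t)$, the bracket vanishes and we are left with $\Psi'(t)\leq \beta(\rho(t))=\rho'(t)$, i.e.\ the inequality becomes exactly the ODE $\rho$ satisfies. This is what allows comparison without needing any sign or monotonicity assumption on the arbitrary nonnegative continuous functions $\alpha,\beta$.

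To convert this into the pointwise bound \eqref{5.0.2} without requiring $\alpha$ to be monotone, I would use the classical $\varepsilon$-perturbation trick. For small $\varepsilon>0$, let $\rho_{\varepsilon}$ solve $\rho_{\varepsilon}'=\beta(\rho_{\varepsilon})+\varepsilon$ with $\rho_{\varepsilon}(0)=\rho(0)+\varepsilon$, set $\sigma_{\varepsilon}(t):=b-\int_0^t\alpha(\rho_{\varepsilon}(\tau))\,d\tau$, and $\Psi_{\varepsilon}(t):=\Phi_{\sigma_{\varepsilon}(t)}(u(t))$. By construction $\Psi_{\varepsilon}(0)<\rho_{\varepsilon}(0)$. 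Suppose for contradiction that $t^{\ast}$ is the first touching time $\Psi_{\varepsilon}(t^{\ast})=\rho_{\varepsilon}(t^{\ast})$; then $\Psi_{\varepsilon}'(t^{\ast})\geq \rho_{\varepsilon}'(t^{\ast})$ by the definition of $t^{\ast}$. On the other hand, repeating the computation above with $\sigma_{\varepsilon},\rho_{\varepsilon}$ in place of $\sigma,\rho$, the bracket cancels at $t^{\ast}$ and one obtains $\Psi_{\varepsilon}'(t^{\ast})\leq \beta(\rho_{\varepsilon}(t^{\ast}))<\beta(\rho_{\varepsilon}(t^{\ast}))+\varepsilon=\rho_{\varepsilon}'(t^{\ast})$, a contradiction. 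Thus $\Psi_{\varepsilon}<\rho_{\varepsilon}$ throughout the common interval of existence, and continuous dependence of the scalar ODE on its data and parameters allows one to send $\varepsilon\to 0^{+}$ to recover $\Psi(t)\leq \rho(t)$.

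The main obstacles I anticipate are not conceptual but bookkeeping. One must verify that $\Psi$ (and each $\Psi_{\varepsilon}$) is genuinely differentiable in $t$, which requires using condition (a) to assert continuity of $t\mapsto D\Phi_{\sigma(t)}(u(t))F(u(t))$, and a separate regularity statement for the map $(\sigma,v)\mapsto \partial_{\sigma}\Phi_{\sigma}(v)$ so that the first term in $\Psi'(t)$ makes sense; one must also keep track of the admissible range, namely $\sigma_{\varepsilon}(t)<\bar{\sigma}$ (which follows from $\alpha\geq 0$) and $u(t)\in O$ (which is built into the hypothesis $u\in C([0,T];O)$). With these technicalities in place, the comparison via the $\varepsilon$-perturbation is routine, and \eqref{5.0.2} follows on $[0,T]\cap[0,T_1]$.
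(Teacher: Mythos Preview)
The paper does not prove this lemma at all; it is quoted as a black box with a citation to Theorem~1 in \cite{KM} and Theorem~4.1 in \cite{BHP1}. Your proposal is precisely the classical Kato--Masuda comparison argument from those references: define $\Psi(t)=\Phi_{\sigma(t)}(u(t))$, differentiate via the chain rule, use hypothesis~(b) to bound $\Psi'$, and exploit that the bracket $[\alpha(\Psi)-\alpha(\rho)]\,\partial_\sigma\Phi$ vanishes at a first touching time so that an $\varepsilon$-perturbed comparison goes through without any monotonicity of $\alpha$. The strategy is sound and the bookkeeping points you flag (differentiability of $\Psi$, that $\sigma_\varepsilon(t)<\bar\sigma$ since $\alpha\geq 0$, and regularity of $(\sigma,v)\mapsto\partial_\sigma\Phi_\sigma(v)$) are exactly the ones the original proof handles.

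One small point you should make explicit in the perturbation step: to invoke (b) at the first touching time $t^\ast$ you need $\Psi_\varepsilon(t^\ast)<\bar r$, since $\alpha$ and $\beta$ are only assumed continuous on $(-\infty,\bar r)$. This follows because $\Psi_\varepsilon(t^\ast)=\rho_\varepsilon(t^\ast)$ and, by continuous dependence of the scalar ODE, $\rho_\varepsilon\to\rho$ uniformly on compact subintervals of $[0,T_1)$; since $\rho<\bar r$ there, one can first restrict to $[0,T_1-\delta]$ and then take $\varepsilon$ small. With that addition the argument is complete.
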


We observe the complexity of the Kato-Masuda Theorem and the amount of hypothesis required for the final result. It is important to mention as well that the procedure to prove our desired Proposition \ref{teo5.1} goes through Kato-Masuda Theorem and \eqref{5.0.2}, see also Proposition 4.1 of \cite{BHP1}.

However, one of the main issues is to establish the bound of item $(b)$. Before doing so, we shall define convenient parameters and functions that will be used from now on.

For $u\in H^{m+5}(\R)$ and $m\geq 0$, let
$$\Phi_{\sigma,m}(u) = \frac{1}{2}\kma{u}{\sigma}^2=\frac{1}{2}\sum\limits_{j=0}^m\frac{1}{(j!)^2}e^{2\sigma j} \sobolev{\partial_x^ju}{2}^2.$$
Given $u_0\in G^{1,s}(\R), s>5/2,$ such that $m_0$ does not change sign, let $u\in C([0,\infty),H^{\infty}(\R))$ be the unique global solution of \eqref{nonlocal1}. From the embedding $G^{1,s}(\R)\subset A(1),$ we have that $u_0\in A(1)$. Let $\sigma_0<0=: \bar{\sigma}$, which means that $e^{\sigma_0}<1$ and, from the definition of $A(1)$, we have $\Vert u\Vert_{\sigma_0,2}<\infty$.

For the global solution $u$, fix $T>0$ and define $\mu = 1+\max\{\sobolev{u}{2}; t\in [0,T]\}$ and $O= \{v\in H^{m+5}(\R); \Vert v\Vert_{H^2}<\mu\}$. Observe that the family $\{\Psi_{\sigma,m}; -\infty < \sigma < \bar{\sigma}, m\geq 0\}$ is well-defined on $O$ and $F:O\rightarrow X$ is continuous. Moreover, for this same family item $(a)$ is satisfied, see Kato and Masuda \cite{KM}, page 460. For item $(b)$, we will need the following result.

\begin{proposition}\label{prop5.5}
    Given $u\in H^{m+5}(\R), m\geq 0,$ for $\sigma\in \R$ we have the bound
    \begin{align*}
        \vert D\Phi_{\sigma,m}F(u)\vert \leq \bar{K}(\sobolev{u}{2})\Phi_{\sigma,m}(u) + \bar{\alpha}(\sobolev{u}{2},\Phi_{\sigma,m}(u))\partial_{\sigma}\Phi_{\sigma,m}(u),
    \end{align*}
    where $\bar{K}(p) = 224p$ and $\bar{\alpha}(p,q) = 832(1+p)q^{1/2}$.
\end{proposition}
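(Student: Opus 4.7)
The starting point is the identity
\begin{equation*}
D\Phi_{\sigma,m}(u)F(u) \;=\; \sum_{j=0}^{m}\frac{e^{2\sigma j}}{(j!)^2}\langle \partial_x^j u,\,\partial_x^j F(u)\rangle_{H^2},
\end{equation*}
together with the explicit expression
\begin{equation*}
\partial_\sigma\Phi_{\sigma,m}(u) \;=\; \sum_{j=0}^{m}\frac{j\,e^{2\sigma j}}{(j!)^2}\|\partial_x^j u\|_{H^2}^2 .
\end{equation*}
I would split $F(u)=-uu_x - \frac{3}{2}\partial_x(1-\partial_x^2)^{-1}u_x^2$ and treat the transport and nonlocal pieces separately. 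For the transport piece, Leibniz gives $\partial_x^j(uu_x)=\sum_{l=0}^j\binom{j}{l}(\partial_x^{j-l}u)(\partial_x^{l+1}u)$; the term $l=0$, namely $u\,\partial_x^{j+1}u$, is the only one whose factor has order exceeding~$j$, and its pairing with $\partial_x^j u$ in the $H^2$ inner product can be integrated by parts. This replaces it by $-\frac{1}{2}u_x(\partial_x^j u)^2$ plus commutator terms arising from $[\partial_x^2,u]\partial_x^j u$, each of which is bounded by $\|u\|_{H^2}\|\partial_x^j u\|_{H^2}^2$ through the Sobolev embedding $H^2\hookrightarrow W^{1,\infty}$. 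Summing in $j$ yields the $\bar K(\|u\|_{H^2})\Phi_{\sigma,m}(u)$ contribution with $\bar K$ linear in $\|u\|_{H^2}$.

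For the remaining Leibniz terms ($1\le l\le j$) in the transport piece and for the whole nonlocal piece, no top-order cancellation is available; however, $\partial_x(1-\partial_x^2)^{-1}$ is bounded on every $H^s$, so the nonlocal term effectively loses one derivative and reduces to the same type of estimate as the transport remainder. The key step is a Kato--Masuda-type convolution bound of the form
\begin{equation*}
\sum_j\frac{e^{2\sigma j}}{(j!)^2}\sum_{l=1}^j \binom{j}{l}\|\partial_x^{j-l}u\|_{H^2}\|\partial_x^{l+1}u\|_{H^2}\|\partial_x^j u\|_{H^2}\;\le\; C\|u\|_{H^2}\,\Phi_{\sigma,m}^{1/2}(u)\,\partial_\sigma\Phi_{\sigma,m}(u),
\end{equation*}
in which one factor is absorbed by Sobolev embedding into $\|u\|_{H^2}$, one combines with the weight $e^{\sigma j}/(j!)$ to produce the factor $\Phi^{1/2}$, and the remaining factor, after splitting $\binom{j}{l}$ to redistribute the weight $1/(j!)^2$ as a product of $1/((j-l)!)^2$ and $1/(l!)^2$, carries an extra factor $l$ that matches the definition of $\partial_\sigma\Phi$. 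The ``$1$'' in the factor $1+\|u\|_{H^2}$ inside $\bar\alpha$ reflects the $L^2$-boundedness of $\partial_x(1-\partial_x^2)^{-1}$ in the nonlocal piece, where no additional $H^2$ norm of a low-order factor is needed.

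The main obstacle is the bookkeeping required to reach the precise numerical constants $224$ and $832$. This amounts to combining: the factor $\frac{1}{2}$ in the definition of $\Phi_{\sigma,m}$; the factor $\frac{3}{2}$ multiplying the nonlocal term and the trivial operator bound $\|\partial_x(1-\partial_x^2)^{-1}\|_{H^s\to H^s}\le 1$; the three $L^2$-level subestimates needed for the $H^2$ inner product (namely at orders $L^2$, $\partial_x$ and $\partial_x^2$), each producing its own Leibniz expansion; the combinatorial identity that converts $\sum_l\binom{j}{l}/[(l!)(j-l)!]^2$ back into $1/(j!)^2$ up to an effective multiplicative constant; and finally a Cauchy--Schwarz rearrangement that redistributes the weights $e^{2\sigma j}/(j!)^2$ so as to produce exactly one copy of $\Phi$ (or $\Phi^{1/2}$) and one copy of $\partial_\sigma\Phi$ on the right-hand side. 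The exact values $224$ and $832$ are not essential for the application in Proposition~\ref{teo5.1}; what matters is the polynomial dependence on $\|u\|_{H^2}$ and the clean separation into $\bar K\cdot\Phi$ and $\bar\alpha\cdot\partial_\sigma\Phi$ contributions required by the Kato--Masuda machinery.
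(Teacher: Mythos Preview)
Your outline is essentially the same approach the paper takes: write
\[
D\Phi_{\sigma,m}(u)F(u)=\sum_{j=0}^{m}\frac{e^{2\sigma j}}{(j!)^2}\langle \partial_x^j u,\partial_x^j F(u)\rangle_{H^2},
\]
split $F$ into the transport piece $-uu_x$ and the nonlocal piece $-\tfrac{3}{2}\partial_x(1-\partial_x^2)^{-1}u_x^2$, and bound each separately in the Kato--Masuda form. The paper, however, does not carry out the Leibniz/commutator bookkeeping you sketch; it simply invokes Lemma~4.1 of Barostichi--Himonas--Petronilho \cite{BHP1} (their equations (6.14) and (6.16) specialised to $k=1$), which give directly $\bar K_1(p)=32p,\ \alpha_1(p,q)=64(1+p)q^{1/2}$ for the transport piece and $\bar K_2(p)=64p,\ \alpha_2(p,q)=256(1+p)q^{1/2}$ for the nonlocal piece (with the $\tfrac12$), then adds these with the coefficient $3$ to obtain $224$ and $832$.

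One point in your sketch is slightly off. In the displayed convolution bound you say ``one factor is absorbed by Sobolev embedding into $\|u\|_{H^2}$''; this is not how the middle Leibniz terms $1\le l\le j-1$ are handled in the Kato--Masuda argument, since neither $\partial_x^{j-l}u$ nor $\partial_x^{l+1}u$ is low order in general. The correct mechanism (as in \cite{KM,BHP1}) is to factor the weight exactly as $\frac{e^{\sigma j}}{j!}\binom{j}{l}=\frac{e^{\sigma(j-l)}}{(j-l)!}\cdot\frac{e^{\sigma l}}{l!}$, shift the index on the $\partial_x^{l+1}u$ factor to produce the extra $(l+1)$, and then use Cauchy--Schwarz in $j$ together with a discrete Young/convolution inequality; this yields a bound of the form $C\,\Phi_{\sigma,m}^{1/2}\,\partial_\sigma\Phi_{\sigma,m}$ \emph{without} an additional $\|u\|_{H^2}$ factor for these bulk terms. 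The $\|u\|_{H^2}$ in $\bar\alpha$ comes from the boundary Leibniz terms and from the commutators you already identified. This is a minor slip in exposition, not a structural gap, and it does not affect the final form of the estimate.
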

\begin{proof}
    Since $F(u) = \displaystyle{-\frac{1}{2}\partial_x[u^2 + 3(1-\partial_x^2)^{-1}u_x^2]}$ and $\displaystyle{\frac{1}{2}D\sobolev{\partial_x^ju}{2}^2w = \langle \partial_x^jw\,,\, \partial_x^j u\rangle_{H^2}},$
    see \cite{BHP,KM}, by making $w = F(u)$ and summing over $j$ according to $\Phi_{\sigma,m}$ we have
    \begin{align*}
        \vert D\Phi_{\sigma,m}(u)F(u)\vert =&\left\vert \sum\limits_{j=0}^{m}\frac{e^{2\sigma j}}{(j!)^2}\langle\partial_x^j u\,,\,\partial_x^jF(u)\rangle_{H^2}\right\vert\\
        \leq& \left\vert\sum\limits_{j=0}^{m}\frac{e^{2\sigma j}}{(j!)^2}\langle\partial_x^j u\,,\,\partial_x^j(uu_x)\rangle_{H^2} \right\vert+ \frac{3}{2}\left\vert\sum\limits_{j=0}^{m}\frac{e^{2\sigma j}}{(j!)^2}\langle\partial_x^j u\,,\,\partial_x^{j+1}(1-\partial_x^2)^{-1}u_x^2\rangle_{H^2}\right\vert.
    \end{align*}
    From the proof of Lemma 4.1 in \cite{BHP1} (equations (6.14) and (6.16) with $k=1$), we know that
    $$\left\vert\sum\limits_{j=0}^{m}\frac{e^{2\sigma j}}{(j!)^2}\langle\partial_x^j u\,,\,\partial_x^j(uu_x)\rangle_{H^2} \right\vert \leq \bar{K}_1(\sobolev{u}{2})\Phi_{\sigma,m}(u) + \alpha_1(\sobolev{u}{2},\Phi_{\sigma,m}(u))\partial_{\sigma}\Phi_{\sigma,m}(u),$$
    where $\bar{K_1}(p) = 32p$ and $\alpha_1(p,q) = 64(1+p)q^{1/2}$, and
    $$\frac{1}{2}\left\vert\sum\limits_{j=0}^{m}\frac{e^{2\sigma j}}{(j!)^2}\langle\partial_x^j u\,,\,\partial_x^{j+1}(1-\partial_x^2)^{-1}u_x^2\rangle_{H^2}\right\vert \leq \bar{K}_2(\sobolev{u}{2})\Phi_{\sigma,m}(u) + \alpha_2(\sobolev{u}{2},\Phi_{\sigma,m}(u))\partial_{\sigma}\Phi_{\sigma,m}(u),$$
    where $\bar{K_2}(p) = 64p$ and $\alpha_1(p,q) = 256(1+p)q^{1/2}$. Under substitution of the respective terms in the inequality for $D\Phi_{\sigma,m}(u)F(u)$ we obtain
    \begin{align*}
        \vert D\Phi_{\sigma,m}(u)F(u)\vert \leq& \bar{K}(\sobolev{u}{2})\Phi_{\sigma,m}(u) + \bar{\alpha}(\sobolev{u}{2},\Phi_{\sigma,m}(u))\partial_{\sigma}\Phi_{\sigma,m}(u),
    \end{align*}
    where $\bar{K}(p) = 224p$ and $\bar{\alpha}(p,q) = 832(1+p)q^{1/2}$.
\end{proof}

\textbf{Proof of Proposition \ref{teo5.1}.} To prove the proposition, we basically need to complete the details for item $(b)$ of the Kato-Masuda Theorem. Therefore, we need to find $\bar{r}>0$ and continuous functions $\alpha(r)$ and $\beta(r)$ for $-\infty<r<\bar{r}$.

For $\bar{K}$ and $\bar{\alpha}$ given in Proposition \ref{prop5.5}, let 
\begin{align*}
    &K = \bar{K}(\mu),\quad \beta(r) = Kr,\quad r\geq 0,\\
    &\rho(t) = \frac{1}{2}\km{u_0}{\sigma_0}{2}^2e^{Kt},\quad \rho_m(t) = \frac{1}{2}\kma{u_0}{\sigma_0}^2e^{Kt},\\
    &\bar{r} = 1+ \max\{\rho(t);t\in[0,T]\}, \quad \alpha(r) = \bar{\alpha}(\mu,r),
\end{align*}
Observe that
\begin{enumerate}
    \item[(i)] $\alpha(r)$ and $\beta(r)$ are continuous for $r<\bar{r}$;
    \item[(ii)] $\rho_m(t)\leq \rho(t),$ for $t\in[0,T]$ and $\rho_m(t)\to \rho(t)$ uniformly.
    \item[(iii)] $\bar{K}(p)= 224p$ and $\bar{\alpha}(\bar{p},q) = 832(1+\bar{p})q^{1/2}$, for fixed $\bar{p}$, are nondecreasing.
\end{enumerate}

From the definition of $\mu$, $\mu>\Vert u\Vert_{H^2}$ and then from observation (iii) above, for all $v\in O$, we have
\begin{align*}
    &\bar{K}(\Vert u\Vert_{H^2}) \leq \bar{K}(\mu) = K,\\
    &\bar{\alpha}(\Vert v\Vert_{H^2},\Phi_{\sigma,m})\leq \bar{\alpha}(\mu,\Phi_{\sigma,m}) = \alpha(\Phi_{\sigma,m}(v)).
\end{align*}
The inequality of Proposition \ref{prop5.5} then yields
\begin{align*}
    \vert D \Phi_{\sigma,m}(v)\vert &\leq K \Phi_{\sigma,m}(v) + \alpha(\Phi_{\sigma,m}(v))\partial_{\sigma}\Phi_{\sigma,m}(v)\\
    &= \beta(\Phi_{\sigma,m}(v)) + \alpha(\Phi_{\sigma,m}(v))\partial_{\sigma}\Phi_{\sigma,m}(v)
\end{align*}
and item $(b)$ is finally satisfied.

For the same $T>0$, let $b:=\sigma_0<\bar{\sigma}$ and observe that $\rho_m(t)$ is a solution for the Cauchy problem
\begin{align*}
    \frac{d}{dt} \rho_m(t) = \beta(\rho_m(t)),\quad \rho_m(0) = \Phi_{\sigma_0,m}(u_0),\quad t\geq 0.
\end{align*}
Since
\begin{align*}
    \Phi_{\sigma_0,m}(u_0) = \frac{1}{2} \Vert u_0\Vert_{\sigma_0,2,m}^2 \leq \frac{1}{2}\Vert u_0\Vert^2_{\sigma_0,2},
\end{align*}
Kato-Masuda Theorem says that for
$$\sigma_m(t) = \sigma_0 - \int_0^t \alpha(\rho_m(\tau))d\tau,\quad t\in[0,T],$$
we have
\begin{align*}
    \Phi_{\sigma_m(t),m}(u) = \frac{1}{2}\Vert u\Vert_{\sigma_m(t),2,m}^2\leq \rho_m(t)
    \leq \rho(t) = \frac{1}{2}\Vert u_0\Vert^2_{\sigma_0,2}e^{Kt},
\end{align*}
for $t\in[0,T]$. By letting $m\to \infty$, we obtain
$$\Vert u\Vert_{\sigma(t),2}^2\leq \Vert u_0\Vert^2_{\sigma_0,2}e^{Kt}\leq \Vert u_0\Vert^2_{\sigma_0,2} e^{KT}<\infty$$
and $u(t)\in A(r_1)$ for $r_1 = e^{\sigma(t)}\leq e^{\sigma(T)}$ for $t\in[0,T]$. Once we have the expressions for $\alpha$ and $\rho$, we can estimate the radius of spatial analyticity $\sigma(t)$. In fact, we have
\begin{align}\label{5.2.2}
    \sigma(t) = \sigma_0 - \int_0^t\alpha(\rho(\tau))d\tau = \sigma_0 - A(e^{Bt}-1),
\end{align}
where $A=\frac{26\sqrt{2}}{7\mu}(1+\mu)\km{u_0}{\sigma_0}{2}$ and  $B=112\mu$. Observe that since $\mu\geq 1,$ we have $A\leq \frac{52\sqrt{2}}{7}\km{u_0}{\sigma_0}{2}=:L_1$. By letting $L_2:=B$, we have
$$r(t) = e^{\sigma(t)} \geq e^{\sigma_0+L_1}e^{-L_1e^{L_2t}} = L_3e^{-L_1e^{L_2t}},$$
where $L_3=r(0)e^{L_1}$.

\subsection{Proof of Proposition \ref{prop5.3}}

Given $u_0 \in G^{1,s}(\R),$ with $s>5/2$, let $u\in C([0,\infty);A(r_1))$ be the unique solution whose existence is guaranteed by Proposition \ref{teo5.1}. For the same initial data, from Corollary \ref{cor3.1} there are $\tilde{T}>0$ and a unique solution $\tilde{u}\in C^{\omega}([0,\tilde{T}(1-\delta);G^{\delta,s}(\R))$ for $\delta\in (0,1)$. Let $T = \frac{\tilde{T}}{2}(1-\delta)$, that is,
$\delta = \displaystyle{1-2\frac{T}{\tilde{T}} =: \delta (T)},$
and $\tilde{u}\in C^{\omega}([0,T];G^{\delta,s}(\R))\subset C^{\omega}([0,T];A(\delta(T)))$ once $G^{\delta,s}(\R) \subset A(\delta)$.

Since $A(r)\hookrightarrow H^{\infty}(\R)$ for $r>0$, then $\tilde{u}\in C^{\omega}([0,T];H^{\infty}(\R))\subset C([0,T];H^{\infty}(\R))$. From the uniqueness of the solution, we know that $u=\tilde{u}$ for $t\in[0,T]$, which means that $u\in C^{\omega}([0,T];A(\delta(T))),$ for $T = \frac{\tilde{T}}{2}(1-\delta)$ and $\delta(T)>0$.

\subsection{Proof of Proposition \ref{prop5.4}}

We will make use of the following result from \cite{BHP1} (Lemma 5.1).

\begin{lemma}\label{lemma5.4}
    Let $\delta>0$ and $m\geq 1$. Then $E_{\delta,m}(\R)$ is continously embedded in $A(\delta)$. Conversely, if $f\in A(r)$ for some $r>0$ then $f\in E_{\delta,m}(\R)$ for all $\delta<r/e$.
\end{lemma}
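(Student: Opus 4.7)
My plan is to verify each inclusion by translating term by term between the supremum-based $E_{\delta,m}$ norm and the series-based $A(r)$ norm, tracking the geometric ratio $(\delta/e^{\sigma})^j$ and absorbing it with whichever polynomial weight is available on each side.

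For the forward direction, starting from $f\in E_{\delta,m}(\R)$, the definition of the norm will immediately give the pointwise bound $\sobolev{\partial_x^j f}{2m}\leq j!/(\delta^j(j+1)^2)\,\hm{f}{\delta}{m}$ for every $j\geq 0$. Since $H^{2m}(\R)\hookrightarrow H^s(\R)$ for $0\leq s\leq 2m$, inserting this bound into the series \eqref{2.0.1a} will yield
$$\km{f}{\sigma}{s}^2\leq\hm{f}{\delta}{m}^2\sum_{j=0}^\infty\frac{(e^{\sigma}/\delta)^{2j}}{(j+1)^4}.$$
The factor $(j+1)^{-4}$ absorbs the geometric ratio even at the boundary $e^{\sigma}=\delta$, so the sum is dominated by $\pi^4/90$ whenever $e^{\sigma}\leq\delta$. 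This will prove $f\in A(\delta)$ together with a continuous embedding governed by a universal constant.

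For the converse, I will assume $f\in A(r)$ and fix $\delta<r/e$. Since every individual term in \eqref{2.0.1a} is controlled by the whole sum, I will obtain $\sobolev{\partial_x^j f}{2m}\leq j!\,e^{-\sigma j}\km{f}{\sigma}{2m}$ for any $\sigma$ with $e^{\sigma}<r$. Substituting into the definition of $\hm{\cdot}{\delta}{m}$ leaves
$$\hm{f}{\delta}{m}\leq\km{f}{\sigma}{2m}\sup_{j\in\mathbb{Z}_+}(j+1)^2\left(\frac{\delta}{e^{\sigma}}\right)^{\!j}.$$
My plan is then to choose $\sigma$ so that $e^{\sigma}=e\delta$, which is permissible precisely because $\delta<r/e$. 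That choice makes $\delta/e^{\sigma}=1/e$ and reduces the remaining supremum to $\sup_j(j+1)^2 e^{-j}$, an absolute constant, producing the desired bound.

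The argument is otherwise routine bookkeeping, and I do not anticipate a serious technical obstacle; the one conceptual point to highlight is the asymmetry in the two thresholds. The $E_{\delta,m}$ norm carries a built-in polynomial weight $(j+1)^2$ that the forward embedding simply discards but that the backward embedding must compensate for, and paying for that polynomial costs exactly a factor of $e$ in the radius of analyticity, which is why the converse requires $\delta<r/e$ rather than $\delta<r$.
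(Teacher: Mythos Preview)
Your argument is correct and is precisely the elementary computation one expects; the paper does not supply its own proof but simply defers to Lemma~5.1 of \cite{BHP1}, where the same estimates are carried out. One small refinement is worth making in the forward direction: the space $A(\delta)$ carries the seminorms $\km{\cdot}{\sigma}{s}$ for \emph{all} $s\geq 0$, not only $s\leq 2m$, so a continuous embedding requires you to bound those as well. This is routine: for $s\leq 2m+l$ use $\sobolev{\partial_x^j f}{s}\leq C_l\bigl(\sobolev{\partial_x^j f}{2m}+\sobolev{\partial_x^{j+l} f}{2m}\bigr)$, which inserts an additional polynomial factor $\bigl((j+l)!/j!\bigr)^2$ of order $j^{2l}$ into the series; the \emph{strict} inequality $e^{\sigma}<\delta$ that defines $A(\delta)$ then absorbs it, so nothing essential changes. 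Your closing remark about the asymmetry of thresholds and the cost of the $(j+1)^2$ weight is exactly the right intuition for why the converse needs $\delta<r/e$.
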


\textbf{Proof of Proposition \ref{prop5.4}.} We will prove the result by contradiction as we assume that $T^{\ast}<\infty$. For the solution $u \in C^{\omega}([0,T];A(\delta(T)))\subset C([0,T];H^{\infty}(\R))$, from the uniqueness of the global solution and the definition of $T^{\ast}$ it is immediate that if $T^{\ast}<\infty$, then $u(T^{\ast})$ is well-defined. Moreover, from Proposition \eqref{teo5.1}, we have $u(T^{\ast})\in A(r_1)$.

Let $\delta_0<\min \{1, r_1/e\}$ and then the converse of Lemma \ref{lemma5.4} tells that
$$u(T^{\ast})\in A(r_1)\subset E_{\delta_0,m}(\R),\quad m\geq3,$$
for all $\delta_0>0.$ From Proposition \ref{teo1}, there exist $\epsilon>0$ and a unique solution $\tilde{u}\in C^{\omega}([0,\epsilon];E_{\delta,m}(\R))$ for $0<\delta<\delta_0$ such that $\tilde{u}(0) = u(T^{\ast})$. On the other hand, since $E_{\delta,m}(\R)\hookrightarrow A(\delta)\subset H^{\infty}(\R),$ we have
$$\tilde{u}\in C^{\omega}([0,\epsilon];H^{\infty}(\R))\subset C([0,\epsilon];H^{\infty}(\R))$$
and the uniqueness of the global solution tells that $\tilde{u}(0) = u(T^{\ast})$, which means that
$$\tilde{u}(t) = u(T^{\ast}+t),\quad t\in[0,\epsilon].$$

Let $s=T^{\ast}+t$. Then $u(s) = \tilde{u}(s-T^{\ast}),$ for $s\in[T^{\ast},T^{\ast}+\epsilon],$
that is,
$$u \in C^{\omega}([T^{\ast},T^{\ast}+\epsilon];E_{\delta,m}(\R))\subset C^{\omega}([T^{\ast},T^{\ast}+\epsilon];A(\delta)).$$
Based on the definition of $T^{\ast}$, let $T>0$ be such that $T^{\ast}-\epsilon< T< T^{\ast}$ and the solution $u$ then belongs to $C^{\omega}([0,T];A(\delta(T)))$ for some $\delta(T)>0$.

Observe now that if $\sigma'\geq \sigma,$ then $A(\sigma')\subset A(\sigma).$ By defining $\tilde{\delta} = \min\{\delta,\delta(T)\}$, which means in particular that $\delta \geq \tilde{\delta}$ and $\delta(T)\geq \tilde{\delta}$, then
$$u\in C^{\omega}([0,T^{\ast}];A(\tilde{\delta})),\quad\text{and}\quad u\in C^{\omega}([T^{\ast},T^{\ast}+\epsilon];A(\tilde{\delta})),$$
which says that $T^{\ast}$ cannot be the supremum. As a result of the contradiction, $T^{\ast}$ must be infinite and, for every $T>0$, there exists $r(T)>0$ such that $u\in C^{\omega}([0,T];A(r(T)))$.

\section*{Acknowledgments}

This work was supported by FAPESP (grant number 2019/23688-4) and by the Royal Society under a Newton International Fellowship (reference number 201625).

\end{document}